\begin{document}
\title{A General Continuous-Time Formulation of Stochastic ADMM and Its Variants}
\author{
Chris Junchi Li
\thanks{University of California, Berkeley, CA 94720-1776, USA; email: \url{junchili@berkeley.edu}}
}
\date{\today}
\maketitle

\begin{abstract}
Stochastic versions of the alternating direction method of multiplier (ADMM) and its variants play a key role in many modern large-scale machine learning problems. In this work, we introduce a unified algorithmic framework called generalized stochastic ADMM and investigate their continuous-time analysis. The generalized framework widely includes many stochastic ADMM variants such as standard, linearized and gradient-based ADMM. Our continuous-time analysis provides us with new insights into stochastic ADMM and variants, and we rigorously prove that under some proper scaling, the trajectory of stochastic ADMM weakly converges to the solution of a stochastic differential equation with small noise. Our analysis also provides a theoretical explanation of why the relaxation parameter should be chosen between 0 and 2.
\end{abstract}

\section{Introduction}\label{sec1}
For modern industrial-scale machine learning problems, with the massive amount of data that are not only extremely large but often stored or even collected in a distributed manner, stochastic first-order methods almost become one of the default choices due to their excellent performance for the online streaming large-scale datasets.
The stochastic version of the alternating direction method of multiplier (ADMM) algorithms is a popular approach to handling this distributed setting, especially for regularized empirical risk minimization.
Consider the following stochastic optimization problem:
\begin{equation}\label{eq1.1}%\tag{1.1}
\underset{x \in \mathbb{R}^{d}}{\operatorname{minimize}}~
V(x)
:=
f(x)+g(A x)
=
\mathbb{E}_{\xi} f(x, \xi)+g(A x)
\end{equation}
where $f(x)=\mathbb{E}_{\xi} f(x, \xi)$ with $f(x, \xi)$ as the loss incurred on a sample $\xi$, $f: \mathbb{R}^{d} \rightarrow \mathbb{R} \cup$ $\{+\infty\}$, $g: \mathbb{R}^{m} \rightarrow \mathbb{R} \cup\{+\infty\}$, $A \in \mathbb{R}^{m \times d}$, and both $f$ and $g$ are convex.
The alternating direction method of multiplier (ADMM)~\cite{admm3boyd} rewrites~\eqref{eq1.1} as a constrained optimization problem
\begin{equation}\label{eq1.2}%\tag{1.2}
\begin{aligned}
&
\underset{x \in \mathbb{R}^{d}, z \in \mathbb{R}^{m}}{\operatorname{minimize}}
&&
\mathbb{E}_{\xi} f(x, \xi) + g(z)
\\&
\text{subject to}
&&
A x-z = 0
\end{aligned}\end{equation}
Here and throughout the rest of the paper, we overload $f$ to ease the notation, i.e.~we adopt the two-argument $f(\cdot, \xi)$ for the stochastic instance and one-argument $f(\cdot)$ for its expectation.
Note the classical setting of linear constraint $A x+B z=c$ can be reformulated as $z=A x$ by a linear transformation when $B$ is invertible.
In the batch learning setting, $f(x)$ can be approximated by the empirical risk $f_{emp}=\frac{1}{N} \sum_{i=1}^{N} f(x, \xi_{i})$.
However, the computation cost for minimizing $f_{emp}$ with a large amount of samples is significantly high and the efficiency is limited under the time and resource constraints.
In the stochastic setting, at each step $x_{k}$ is updated based on a small batch of samples (or even one) $\xi_{k}$ instead of a large batch or full batch.

Introducing stochasticity to ADMM~\cite{admm3boyd} is in parallel to incorporating noise into gradient descent~\cite{WangBanerjeeIMCL2012,suzuki2013dual,pmlr-v28-ouyang13}.
At iteration $k+1$, a sample $\xi_{k+1}$ is randomly drawn as an independent realization of $\xi$.
Analogous to stochastic gradient descent, our \emph{stochastic ADMM} (\emph{sADMM}) introduced in~\cite{suzuki2013dual,pmlr-v28-ouyang13} performs the following updates:
\begin{subequations}
\begin{align}
x_{k+1}
&=
\underset{x}{\operatorname{argmin}}\left\{f(x, \xi_{k+1})+\frac{\rho}{2}\left\|A x-z_{k}+u_{k}\right\|_{2}^{2}\right\}
\label{eq1.3a}%\tag{1.3a}
\\
z_{k+1}
&=
\underset{z}{\operatorname{argmin}}\left\{g(z)+\frac{\rho}{2}\left\|\alpha A x_{k+1}+(1-\alpha) z_{k}-z+u_{k}\right\|_{2}^{2}\right\}
\label{eq1.3b}%\tag{1.3b}
\\
u_{k+1}
&=
u_{k}+\left(\alpha A x_{k+1}+(1-\alpha) z_{k}-z_{k+1}\right)
\label{eq1.3c}%\tag{1.3c}
\end{align}\end{subequations}
$\rho>0$ is called the penalty parameter~\cite{admm3boyd}.
Here $\alpha \in(0,2)$ is introduced as a relaxation parameter~\cite{admm3boyd}.
The algorithm is split into different regimes of $\alpha$ as follows:

\begin{itemize}
\item
$\alpha=1$, corresponding to the \emph{standard stochastic ADMM};
\item
$\alpha>1$, corresponding to the \emph{over-relaxed stochastic ADMM};
\item
$\alpha<1$, corresponding to the \emph{under-relaxed stochastic ADMM}.
\end{itemize}

Historically, the above relaxation schemes were introduced and analyzed in~\cite{eckstein1992douglas}, and the experiments in~\cite{eckstein1998operator} suggest that the over-relaxed regime of $\alpha>1$ can improve the convergence rate.
The acceleration phenomenon in over-relaxed ADMM has been discussed from the continuous perspective in deterministic setting~\cite{pmlr-v97-yuan19c,francca2023nonsmooth}, where relaxed the ADMM algorithm accelerates by a factor of $\alpha\in (1,2)$ in its convergence rate.

Since the emergence of ADMM, many variants have been introduced recently for solving a variety of optimization tasks.
We focus on two ADMM variants (in our stochastic setting) to cater to the need in application:

\begin{enumerate}
\item[(i)]
In the linearized ADMM~\cite{goldfarb2013fast,lin2011linearized,ouyang2015accelerated},~\eqref{eq1.4a} replaces~\eqref{eq1.3a} where
\begin{equation}
x_{k+1}
:=
\underset{x}{\operatorname{argmin}}\left\{
f(x, \xi_{k+1})
+
\frac{\tau}{2}\left\|
x-(x_{k}-\frac{\rho}{\tau} A^{\top}(A x_{k}-z_{k}+u_{k}))
\right\|_{2}^{2}
\right\}
\label{eq1.4a}%\tag{1.4a}
\end{equation}
In words, the augmented Lagrangian function is approximated by linearizing the quadratic term of $x$ in~\eqref{eq1.3a} plus the addition of a proximal term $\frac{\tau}{2}\|x-x_{k}\|_{2}^{2}$;

\item[(ii)]
In the gradient-based ADMM~\cite{Condat-2013,Vu-2013,Davis-Yin-3OS,Ma-Zhang-EGADM-2013},~\eqref{eq1.5a} replaces~\eqref{eq1.3a} where
\begin{equation}\label{eq1.5a}%\tag{1.5a}
x_{k+1}
:=
x_{k}-\frac{1}{\tau}\left(f^{\prime}\left(x_{k}, \xi_{k+1}\right)+\rho A^{\top}\left(A x_{k}-z_{k}+u_{k}\right)\right)
\end{equation}
In words, instead of solving $x$-subproblem~\eqref{eq1.3a} accurately, we apply one step of gradient descent with stepsize $1 / \tau$.
\end{enumerate}

In this paper we formulate a general scheme, called \emph{Generalized stochastic ADMM} (\emph{G-sADMM)}, to accommodate and unify all these variants of stochastic ADMM:
\begin{subequations}
\begin{align}
x_{k+1}
&=
\underset{x}{\operatorname{argmin}}~\hat{\mathcal{L}}_{k+1}\left(x, z_{k}, u_{k}\right)
\label{eq1.6a}%\tag{1.6a}
\\
z_{k+1}
&=
\underset{z}{\operatorname{argmin}}\left\{g(z)+\frac{\rho}{2}\left\|\alpha A x_{k+1}+(1-\alpha) z_{k}-z+u_{k}\right\|_{2}^{2}\right\}
\label{eq1.6b}%\tag{1.6b}
\\
u_{k+1}
&=
u_{k}+\left(\alpha A x_{k+1}+(1-\alpha) z_{k}-z_{k+1}\right)
\label{eq1.6c}%\tag{1.6c}
\end{align}\end{subequations}
where the objective $\hat{\mathcal{L}}_{k+1}$ in~\eqref{eq1.6a} for $x$-subproblem is defined as
\begin{equation}\label{eq1.7}%\tag{1.7}
\begin{aligned}
&
\hat{\mathcal{L}}_{k+1}\left(x, z_{k}, u_{k}\right)
\\&:=
(1-\omega_{1}) f(x, \xi_{k+1})+\omega_{1} f^{\prime}\left(x_{k}, \xi_{k+1}\right)\left(x-x_{k}\right)+(1-\omega) \cdot \frac{\rho}{2}\left\|A x-z_{k}+u_{k}\right\|_{2}^{2}
\\
&\quad
+
\omega \rho A^{\top}\left(A x_{k}-z_{k}+u_{k}\right)\left(x-x_{k}\right)+\frac{\tau}{2}\left\|x-x_{k}\right\|_{2}^{2}
\end{aligned}\end{equation}
with parameter $\rho, \tau \geq 0, \alpha \in(0,2)$ and explicitness parameters $\omega_{1}, \omega \in[0,1]$.

\section{Main results}\label{sec2}
We study the continuous-time limit of the stochastic sequence $\{x_{k}\}$ defined in~\eqref{eq1.6a} as $\rho \rightarrow \infty$.
Following the recent line of work~\cite {francca2018admm}~\cite{francca2023nonsmooth} and~\cite {pmlr-v97-yuan19c}, we adopt a (speed-up) time rescaling of $\rho^{-1}$, namely the stepsize for (stochastic) ADMM, and hence we are in the small-stepsize regime as in (stochastic) gradient descent.
Also, the proximal parameter $\tau$ in~\eqref{eq1.7} grows linearly in $\rho$ i.e.~$\tau / \rho \rightarrow c$.
In our setting of continuous limit theory, we focus on a fixed interval by $T$ (referred to as "time"), corresponding to $\lfloor\rho T\rfloor$ discrete steps (referred to as "step") in discrete time.
Our main results can be concluded as follows:

\begin{enumerate}
\item[(i)]
As $\rho \rightarrow \infty$ the stochastic sequence $\{x_{\lfloor\rho t\rfloor}\}$ for $1 \leq t \leq T$ admits a continuous limit $\{X_{t}: t \in[0, T]\}$, a stochastic process that solves the following stochastic differential equation (SDE):
\begin{equation}\label{eq2.1}%\tag{2.1}
\left[c I+\left(\frac{1}{\alpha}-\omega\right) A^{\top} A\right] d X_{t}
=
-
\nabla V(X_{t}) d t
+
\sqrt{\frac{1}{\rho}} \sigma(X_{t}) d W_{t}
\end{equation}
where we recall from~\eqref{eq1.1} that $V(x)=f(x)+g(A x)$.
The right hand of~\eqref{eq2.1} consists of a deterministic part and a stochastic part, where $\sigma(x)$ is some diffusion matrix defined later in~\eqref{eq3.8} and $W_{t}$ is the standard Brownian motion in $\mathbb{R}^{d}$.%
\footnote{Section~\ref{sec3.1} is a summary of the background of the stochastic differential equation and the weak approximation, and the theory on the stochastic modified equation for interested readers.}
The continuous-limit approximation is rigorously characterized by weak convergence%
\footnote{The weak convergence with a uniform bound of $p$-th momentum is equivalent to the convergence in the Wasserstein-$p$ metric. In our case, the result holds for Wasserstein-2 convergence.}
in the sense that applying a test function $\varphi$ of a certain class, the expectation of $\varphi(x_{k})-\varphi\left(X_{k / \rho}\right)$ is uniformly in $k \in[0,\lfloor\rho T\rfloor]$ convergent to zero when $\rho$ tends to infinity as stated in Theorem~\ref{theo3.2}.%
\footnote{We warn the reader that this is towards an orthogonal direction to the convergence of $x_{k}$ to the optimizer $x_{*}$ of the objective function as $k \rightarrow \infty$ for a \emph{fixed} $\rho$.}
The above SDE carries the small parameter $\sqrt{1 / \rho}$ in its diffusion term and is also called stochastic modified equation (SME)~\cite{LTE2019JMLR}, due to the historical reason in numerical analysis~\cite{Milstein1995book,KPSDEbook2011}.

\item[(ii)]
We provide a continuous-time explanation of the effect of relaxation parameter $\alpha \in(0,2)$ in stochastic ADMM, which is the first among stochastic ADMM work to our best knowledge.
In light of our weak convergence of the discrete algorithm $x_{k}$, the residual $r_{k}=A x_{k}-z_{k}$ approximately satisfies $r_{k+1} \approx(1-\alpha) r_{k}$ and converges to 0 at a geometric rate $|1-\alpha|<1$.
The rigorous statement is in Corollary~\ref{coro3.1}.
\end{enumerate}

\subsection{Contributions}\label{sec2.1}
Our contribution to this paper is the first continuous-time analysis of stochastic ADMM.
First, we present a unified stochastic differential equation as the continuous-time model of variants of stochastic ADMM (standard ADMM, linearized ADMM, gradient-based ADMM) in the regime of large $\rho$ under weak convergence.
By showing the effective dynamic is restricted in the $x$-component and a careful analysis of noisy fluctuation, we revealed that these stochastic ADMM variants surprisingly have the essentially same continuous limit and the exact diffusion terms as preconditioned SGD, regardless of the constraint, the distributed nature and the stochasticity as well as various parameters in consideration.

Second, we characterize the time evolution of $\operatorname{std}(x_{k})$ and $\operatorname{std}(z_{k})$ in their continuous-time counterparts and explicitly show that the standard deviation of stochastic ADMM variants has the scaling $\rho^{-1 / 2}$ in stochastic ADMM.
Finally, our theoretical analysis of the continuous model provides a simple justification of the effect of relaxation parameter $\alpha \in(0,2)$ and further sheds light on the principled choices of parameters $c, \alpha, \omega$.

\subsection{Related work}\label{sec2.2}
ADMM is a widely used algorithm for solving problems with separable structures in machine learning, statistics, control, etc.
ADMM has a close connection with operator-splitting methods~\cite{Douglas-Rachford-56,Peaceman1955,admm1}.
But ADMM comes back to popularity due to several works like~\cite{Combettes-Pesquet-DR-2007,Goldstein-Osher-08,Wang-Yang-Yin-Zhang-2008} and the highly influential survey paper~\cite{admm3boyd}.
Numerous modern machine learning applications are inspired by ADMM, for example,~\cite{nishihara2015general,mardani2018neural,yang2011alternating,mardani2019degrees,sun2018convolutional,sun2018distributional,NIPS2019_8955}.
Linearized ADMM and gradient-based ADMM are widely used variants of ADMM.
Linearized ADMM has been studied extensively, for example, in~\cite{Chen-Teboulle-1994, He-Liao-Han-Yang-2002,lin2011linearized, Ma-APGM-2012, xu-tucker-2015, Yang-Yuan-Linearized-trace-norm, yang2011alternating, Zhang-Burger-Bresson-Osher-09, ouyang2015accelerated}; gradient-based ADMM has been studied extensively too, for example, in~\cite{Condat-2013,Vu-2013,Davis-Yin-3OS,Ma-Zhang-EGADM-2013}.
In this work, we present a general formulation for relaxed, linearized and gradient-based ADMM, and extended all of them to stochastic optimization.

Several important works studied the relaxation scheme of ADMM~\cite{eckstein1992douglas,eckstein1998operator} and propose to choose $\alpha\in (0,2)$ with the empirical suggestion for over relaxation $1<\alpha<2$.
In this work, we give a simple and elegant proof of the theoretical reason that $\alpha \in(0,2)$.
The recent work in~\cite{francca2018admm,francca2023nonsmooth} establishes the first deterministic continuous-time model for standard ADMM in the form of ordinary differential equation (ODE) for the smooth ADMM and~\cite{francca2023nonsmooth,pmlr-v97-yuan19c} extends its work to non-smooth ADMM, using the tool of differential inclusion, which is motivated by~\cite{vassilis2018differential,osher2016sparse}.
The use of stochastic and online techniques for ADMM has recently drawn a lot of interest.
\cite{WangBanerjeeIMCL2012} first proposed the online ADMM, which learns from only one sample (or a small mini-batch) at a time.
\cite{pmlr-v28-ouyang13,suzuki2013dual} proposed the variants of stochastic ADMM to attack the difficult nonlinear optimization problem inherent in $f(x,\xi)$ by linearization.
Very recently, further accelerated algorithms for the stochastic ADMM have been developed in~\cite{pmlr-v32-zhong14,pmlr-v97-huang19a}.
There are also recent works on combining variance reduction technique and stochastic ADMM
\cite{ADMM-Kwok-2016,VRSADMM-Cheng-2017,YuHuang2017}

The work in~\cite{su2016differential} is one seminal work of using the continuous-time dynamical system to analyze discrete algorithms for optimization such as Nesterov's accelerated gradient method, with important extension to high resolution and symplectic structure in~\cite{attouch2018fast,shi2022understanding,shi2019acceleration,jordan2018dynamical,francca2021dissipative}, to variational perspective in~\cite{wilson2021lyapunov,wibisono2016variational}.
The mathematical analysis of continuous formulation for stochastic optimization algorithms has become an important trend in recent years.
We select an under-represented list out of numerous works: analyzing SGD from the perspective of Langevin MCMC~\cite{zhang2017hitting,cheng2018underdamped,gao2022global,shi2023learning}, analyzing stochastic mirror-descent~\cite{zhou2017stochastic}, analyzing the SGD for the online algorithm of specific statistical models~\cite{li2017diffusion,li2016online,LTE2017SME}.
The mathematical connection between the stochastic gradient descent (SGD) and stochastic modified equation (SME) has been insightfully discovered in~\cite{LTE2017SME,LTE2019JMLR}.
This SME technique, originally arising from the numerical analysis of SDE~\cite{Milstein1995book,KPSDEbook2011}, has become the major mathematical tool for stochastic or online algorithms.
The idea of using optimal control for stochastic continuous models to find adaptive parameters like step-size for stochastic optimization is used in~\cite{LTE2017SME} to control adaptive step size and momentum, and in~\cite{an2018pid} to improve the control on momentum, in~\cite{hu2019diffusion,feng2020uniform} to choose batch size via diffusion approximation.

\section{Weak approximation to stochastic ADMM}\label{sec3}
In this section, we show the weak approximation to the generalized family of stochastic ADMM variants (\emph{G-sADMM})~\eqref{eq1.6a}---\eqref{eq1.6c}.
Throughout this section, we define $\epsilon:=\rho^{-1}$ to serve as an analog of stepsize for (stochastic) ADMM.
The $G$-sADMM scheme~\eqref{eq1.6a}---\eqref{eq1.6c} contains the relaxation parameter $\alpha$, the parameter $c=\tau / \rho$ and the implicitness parameters $\omega, \omega_{1}$.
This scheme~\eqref{eq1.6a}---\eqref{eq1.6c} is very general and includes many existing variants as follows.
It recovers all deterministic variants of ADMM for the setting $f(x, \xi) \equiv f(x)$.
If $\omega_{1}=\omega=\tau=0$, then~\eqref{eq1.6a}---\eqref{eq1.6c} is the standard stochastic ADMM (\emph{sADMM}) or online ADMM~\cite{pmlr-v28-ouyang13,pmlr-v32-zhong14}.
For the setting of parameters $\omega_{1}=0, \omega=1$ and $c>0$ it becomes stochastic version of the linearized ADMM for acceleration~\cite{He-Liao-Han-Yang-2002,Zhang-Burger-Bresson-Osher-09,ouyang2015accelerated}.
If $\omega_{1}=\omega=1$ and $c>0$, this scheme is the stochastic version of the gradient-based ADMM~\cite{Condat-2013,Vu-2013,Davis-Yin-3OS,Ma-Zhang-EGADM-2013}.
In addition, the stochastic ADMM considered in~\cite{pmlr-v28-ouyang13} for non-smooth function corresponds to the setting here with $\alpha=1$, $\omega_{1}=1$, $\omega=0$ and $\tau=\tau_{k} \propto \sqrt{k}$.

\subsection{Stochastic differential equations, weak approximation, and stochastic modified equations}\label{sec3.1}
In this subsection, we introduce several basic concepts, including the definition of stochastic differential equations, the weak approximation, and the stochastic modified equations.

Let $W_{t}$ be the standard Wiener process (i.e.~Brownian motion) in $\mathbb{R}^{d}$.
An equation of the form
\begin{equation}\label{eq3.1}%\tag{3.1}
d X_{t}
=
b(X_{t}, t) d t+\sigma(X_{t}, t) d W_{t}
\end{equation}
where $b(x, t): \mathbb{R}^{d+1} \rightarrow \mathbb{R}^{d}$ and $\sigma(x, t): \mathbb{R}^{d+1} \rightarrow \mathbb{R}^{d \times d}$ are given mappings and $X_{t}$ is an unknown stochastic process, is called a stochastic differential equation (SDE).
In~\eqref{eq3.1}, vector field $b$ is the \emph{drift} coefficient, matrix $\sigma$ is the \emph{diffusion} coefficient, and $\Sigma:=\sigma \sigma^{\top}$ is the \emph{diffusion's covariance matrix}.
$X_{t}$ in~\eqref{eq3.1} on any interval $[0, T]$ can be heuristically regarded as the limit of the stochastic differential equation $
x_{n+1}
=
x_{n}+b(x_{n}, t_{n}) h+\sigma(x_{n}, t_{n})\left(W_{t_{n+1}}-W_{t_{n}}\right)
$ with $t_{n}=n h$ as $h \rightarrow 0$ and $n h\le T$.

On the other hand, for any Markov process $X_{t}$ with a continuous path, if the limits $
b(x, t)
:=
\lim _{h \downarrow 0} \frac{1}{h} \mathbb{E}\left[X_{t+h}-X_{t} \mid X_{t}=x\right]
$ and $
\Sigma(x, t)
:=
\lim_{h \downarrow 0} \frac{1}{h} \mathbb{E}\left[(X_{t+h}-X_{t})(X_{t+h}-X_{t})^{\top} \mid X_{t}=x\right]
$ exist, then the Markov process $X_{t}$ satisfies an SDE given by~\eqref{eq3.1} as its weak solution.
The weak solution roughly refers to the distributions sense in contrast to the realiaations of the path.
We next state the weak convergence below.
For any $p \geq 1$, if a uniform bound on the $p$-th moments holds, then the weak convergence is equivalent to the convergence in the Wasserstein-$p$ metric.

\begin{definition}[Weak convergence]\label{defi3.1}
We say the family (parametrized by $\epsilon$) of the stochastic sequence $\{x_{k}^{\epsilon}: k \geq 1\}$, $\epsilon>0$, weakly converges to (or is a weak approximation to) a family of continuous-time Ito processes $\{X_{t}^{\epsilon}: t \in \mathbb{R}^{+}\}$ with the order $p$ if they satisfy the following condition: For any time interval $T>0$ and for any test function $\varphi$ such that $\varphi$ and its partial derivatives up to order $2 p+2$ belong to $\mathcal{F}$, there exists a constant $C>0$ and $\epsilon_{0}>0$ such that for any $\epsilon<\epsilon_{0}$,
\begin{equation}\label{eq3.2}%\tag{3.2}
\max _{1\le k\le \lfloor T / \epsilon\rfloor}\left|
\mathbb{E} \varphi(X_{k \epsilon}^{\epsilon}) - \mathbb{E} \varphi(x_{k}^{\epsilon})
\right|
\le
C \epsilon^{p}
\end{equation}
\end{definition}
The constant $C$ and $\epsilon_{0}$ in the above inequality are independent of $\epsilon$ but may depend on $T$ and $\varphi$.
In the classic applications to the numerical method for SDE~\cite{Milstein1995book}, the continuous solution $X^{\epsilon}$ is usually independent of $\epsilon$; for the stochastic modified equation, $X^{\epsilon}$ does depend on $\epsilon$.
We drop the subscript $\epsilon$ in $x_{k}^{\epsilon}$ and $X_{t}^{\epsilon}$ for notational ease whenever there is no ambiguity.

The idea of using the weak approximation and the stochastic modified equation was rigorously carried out in~\cite{LTE2017SME}, which is based on an important theorem due to~\cite{Milstein1986}.
In brief, Milstein's theorem links the one-step difference, which has been detailed above, to the global approximation in the weak sense, by checking three conditions on the momentums of the one-step difference.
Since we only consider the first-order weak approximation, Milstein's theorem is introduced in a simplified form below for only $p=1$.
The more general situations can be found in Theorem 5 in~\cite{Milstein1986}, Theorem 9.1 in~\cite{Milstein1995book} and Theorem 14.5.2 in~\cite{KPSDEbook2011}.

Let the stochastic sequence $\{x_{k}\}$ be recursively defined by the iteration written in the form associated with a function $\mathcal{A}(\cdot, \cdot, \cdot)$:
\begin{equation}\label{eq3.3}%\tag{3.3}
x_{k+1}=x_{k}+\epsilon \mathcal{A}(\epsilon, x_{k}, \xi_{k+1})
, \quad 
k \geq 0
\end{equation}
where $\left\{\xi_{k}: k \geq 1\right\}$ are i.i.d.~random variables.
The initial $x_{0}=x \in \mathbb{R}^{d}$, and we define the one step difference $\bar{\Delta}=x_{1}-x$.
We use the parenthetical subscript to denote the dimensional components of a vector like $\bar{\Delta}=(\bar{\Delta}_{(i)}, 1\le i\le d)$.
Assume that there exists a function $K_{1}(x) \in \mathcal{F}$ such that $\bar{\Delta}$ satisfies the bounds of the fourth momentum
\begin{equation}\label{eq3.4}%\tag{3.4}
\left|
\mathbb{E}(\bar{\Delta}_{(i)} \bar{\Delta}_{(j)} \bar{\Delta}_{(m)} \bar{\Delta}_{(l)})
\right|
\le
K_{1}(x) \epsilon^{3}
\end{equation}
for any component indices $i, j, m, l \in\{1,2, \ldots, d\}$ and any $x \in \mathbb{R}^{d}$, For any arbitrary $\epsilon>0$, consider the family of the Ito processes $X_{t}^{\epsilon}$ defined by a stochastic differential equation whose noise depends on the parameter $\epsilon$,
\begin{equation}\label{eq3.5}%\tag{3.5}
d X_{t}
=
b(X_{t}) d t+\sqrt{\epsilon} \sigma(X_{t}) d W_{t}
\end{equation}
$W_{t}$ is the standard Wiener process in $\mathbb{R}^{d}$.
The initial is $X_{0}=x_{0}=x$.
The coefficient functions $b$ and $\sigma$ satisfy certain standard Lipschitz and linear growth conditions; see~\cite{Milstein1995book,LTE2019JMLR}.
Define the one step difference $\Delta=X_{\epsilon}-x$ for the SDE~\eqref{eq3.5}.

\begin{theorem}[Milstein's weak convergence theorem]\label{theo3.1}
If there exists a constant $K_{0}$ and a function $K_{2}(x) \in \mathcal{F}$, such that the following conditions of the first three moments on the error $\Delta-\bar{\Delta}$:
\begin{subequations}
\begin{align}
&
\left|\mathbb{E}(X_{\epsilon}-X_{1})\right|
\le
K_{0} \epsilon^{2}
\label{eq3.6a}%\tag{3.6a}
\\&
\left|\mathbb{E}(\Delta_{(i)} \Delta_{(j)}) - \mathbb{E}(\bar{\Delta}_{(i)} \bar{\Delta}_{(j)})\right|
\le
K_{1}(x) \epsilon^{2}
\label{eq3.6b}%\tag{3.6b}
\\&
\left|\mathbb{E}(\Delta_{(i)} \Delta_{(j)} \Delta_{(l)}) - \mathbb{E}(\bar{\Delta}_{(i)} \bar{\Delta}_{(j)} \bar{\Delta}_{(l)})\right|
\le
K_{2}(x) \epsilon^{2}
\label{eq3.6c}%\tag{3.6c}
\end{align}\end{subequations}
hold for any $i, j, l\in\{1,2, \ldots, d\}$ and any $x \in \mathbb{R}^{d}$, then $\{x_{k}\}$ weakly converges to $\{X_{t}\}$ with the order 1.
In light of the above theorem, we will now call~\eqref{eq3.5} the stochastic modified equation (SME) of the iterative scheme~\eqref{eq3.3}.
\end{theorem}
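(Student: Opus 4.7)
The plan is to prove Theorem~\ref{theo3.1} by the classical semigroup/telescoping argument of Milstein, converting the one-step moment-matching bounds~\eqref{eq3.6a}--\eqref{eq3.6c} into a global weak error of order $\epsilon$. First, I would introduce the backward Kolmogorov function $u(x,t):=\mathbb{E}[\varphi(X_T)\mid X_t=x]$ associated with the SDE~\eqref{eq3.5}, where $T=k^{*}\epsilon$ is the target time and $k^{*}=\lfloor T/\epsilon\rfloor$. Under the standing Lipschitz and linear-growth hypotheses on $b,\sigma$ together with the smoothness of $\varphi$ specified in Definition~\ref{defi3.1}, standard SDE regularity theory yields that $u$ is $C^{1,4}$ in $(t,x)$ with all partial derivatives of order up to $4$ belonging to $\mathcal{F}$, uniformly in $t\in[0,T]$ and $\epsilon\in(0,\epsilon_{0})$. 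This regularity is the main technical prerequisite and is where the hypotheses on $b,\sigma,\varphi$ are consumed.

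Next, using that $u(x,T)=\varphi(x)$ and that $u(X_t,t)$ is a martingale for the SDE (so in particular $\mathbb{E}\varphi(X_T)=u(x_0,0)$), I would telescope the global error as
\begin{equation*}
\mathbb{E}\varphi(x_{k^{*}})-\mathbb{E}\varphi(X_{T})
=
\sum_{j=0}^{k^{*}-1}
\Big(\mathbb{E}\,u(x_{j+1},(j+1)\epsilon)-\mathbb{E}\,u(x_{j},j\epsilon)\Big).
\end{equation*}
Freezing $x_{j}=x$ and writing $\bar{\Delta}=x_{j+1}-x$ and $\Delta=X_{(j+1)\epsilon}-x$ (conditionally on $X_{j\epsilon}=x$), the martingale property forces $\mathbb{E}[u(x+\Delta,(j+1)\epsilon)-u(x,j\epsilon)]=0$, so the $j$-th summand reduces to $\mathbb{E}[u(x+\bar{\Delta},(j+1)\epsilon)-u(x+\Delta,(j+1)\epsilon)]$.

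I would then Taylor-expand $u(x+\cdot,(j+1)\epsilon)$ around $x$ to third order and apply the moment-matching assumptions~\eqref{eq3.6a}--\eqref{eq3.6c} termwise: the mismatches of the first-, second- and third-order increments between $\bar{\Delta}$ and $\Delta$ each contribute $O(\epsilon^{2})$ after being contracted against the corresponding derivatives of $u$ (these contractions remain in $\mathcal{F}$ because both the derivatives of $u$ and the $K_i$ are polynomially bounded). The fourth-order remainder is handled by~\eqref{eq3.4} together with the analogous SDE moment bound $\mathbb{E}|\Delta|^{4}\le C\epsilon^{2}$ (a standard It\^o estimate), again giving $O(\epsilon^{2})$. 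Summing $k^{*}\le T/\epsilon$ local errors of size $O(\epsilon^{2})$ yields a global bound of order $\epsilon$, uniformly in $k\le\lfloor T/\epsilon\rfloor$, which is exactly~\eqref{eq3.2} for $p=1$.

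The main obstacle is not the Taylor bookkeeping but the regularity claim for $u$: one needs polynomial-growth control on $\partial^{\beta}_{x}u(\cdot,t)$ up to $|\beta|=4$ that is \emph{uniform} in $t\in[0,T]$, so that the pairing of these derivatives with the moments of $\bar{\Delta},\Delta$ closes inside $\mathcal{F}$. This is the classical fact that the Feynman--Kac semigroup of a Lipschitz, linearly growing SDE preserves polynomial growth of derivatives; in a full write-up I would import the a priori estimates from~\cite{Milstein1995book,KPSDEbook2011} rather than redo them. A secondary, easily dispatched issue is verifying that the first-moment drift $\mathbb{E}\bar{\Delta}=O(\epsilon)$ needed to make the first-order Taylor term contribute $O(\epsilon^{2})$ in aggregate is implicit in~\eqref{eq3.6a} combined with the SDE drift estimate $\mathbb{E}\Delta=\epsilon b(x)+O(\epsilon^{2})$.
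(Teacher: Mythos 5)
The paper does not actually prove Theorem~\ref{theo3.1}: it is imported verbatim (in simplified, order-$1$ form) from the literature, with the reader pointed to Theorem 5 of~\cite{Milstein1986}, Theorem 9.1 of~\cite{Milstein1995book} and Theorem 14.5.2 of~\cite{KPSDEbook2011}. Your proposal reconstructs the standard proof that lives in those references --- the backward Kolmogorov function $u(x,t)=\mathbb{E}[\varphi(X_T)\mid X_t=x]$, the telescoping decomposition of the global error into one-step errors, the martingale cancellation, and the third-order Taylor expansion contracted against the moment-matching bounds~\eqref{eq3.6a}--\eqref{eq3.6c} with the fourth-order remainder killed by~\eqref{eq3.4}. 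That is exactly the right architecture, and you correctly identify the genuine technical content as the uniform-in-$t$ polynomial-growth control of $\partial_x^\beta u$ up to $|\beta|=4$, which is where the hypotheses on $b,\sigma,\varphi$ are spent. So relative to the paper you are doing strictly more work; relative to Milstein you are doing the same proof.

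One ingredient is missing from your sketch and is worth naming because the simplified statement in the paper also suppresses it: after conditioning on $x_j=x$, each local error is bounded by $K(x)\epsilon^2$ with $K\in\mathcal{F}$, so to sum the $k^*\le T/\epsilon$ local errors you must take expectation over the law of the \emph{discrete} iterate $x_j$ and hence need $\sup_{j\le \lfloor T/\epsilon\rfloor}\mathbb{E}\,K(x_j)<\infty$, i.e.\ uniform polynomial moment bounds on the chain $\{x_j\}$. This is an explicit standing hypothesis in Milstein's theorem (and is usually discharged via linear growth of $\mathcal{A}$ in~\eqref{eq3.3}, which in this paper follows from Assumption~II(iii)); without it the telescoping sum does not close. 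Apart from that omission, and the routine remark that the $\max$ over $k$ in~\eqref{eq3.2} is obtained by running the same argument with each $k\epsilon$ as terminal time at no extra cost, your argument is sound.
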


For the SDE~\eqref{eq3.5} at the small noise $\epsilon$, by the Ito-Taylor expansion, it is wellknown that $\mathbb{E} \Delta=b(x) \epsilon+\mathcal{O}(\epsilon^{2})$ and $\mathbb{E}\left[\Delta \Delta^{\top}\right]=\left(b(x) b(x)^{\top}+\sigma(x) \sigma(x)^{\top}\right) \epsilon^{2}+\mathcal{O}(\epsilon^{3})$ and $\mathbb{E}\left(\Pi_{m=1}^{s} \Delta_{\left(i_{m}\right)}\right)=\mathcal{O}(\epsilon^{3})$ for all integer $s \geq 3$ and the component index $i_{m}=1, \ldots, d$.
For more we refer to~\cite{KPSDEbook2011} and Lemma~1 in~\cite{LTE2017SME}.
Hence the main receipt to apply Milstein's theorem is to examine the conditions of the momentums for the discrete sequence $\bar{\Delta}=x_{1}-x_{0}$.

One prominent work~\cite{LTE2017SME} is to use the SME as a weak approximation to understand the dynamical behavior of the stochastic gradient descent (SGD).
The prominent advantage of this technique is that the fluctuation in the SGD iteration can be well captured by the fluctuation in the SME.
We brief the result as follows.
For the expectation minimization problem $\min _{x \in \mathbb{R}} f(x)=\mathbb{E}_{\xi} f(x, \xi)$, the SGD iteration is $x_{k+1}=x_{k}-\epsilon f^{\prime}\left(x_{k}, \xi_{k+1}\right)$ with the step size $\epsilon$.
Then by Theorem~\ref{theo3.1}, the corresponding SME of first-order approximation is
\begin{equation}\label{eq3.7}%\tag{3.7}
d X_{t}
=
-f^{\prime}(x) d t+\sqrt{\epsilon} \sigma(x) d W_{t}
\end{equation}
with $
\sigma(x)
=
\operatorname{std}_{\xi}\left(f^{\prime}(x, \xi)\right)
=
\left(\mathbb{E}(f^{\prime}(x)-f^{\prime}(x, \xi))^{2}\right)^{1 / 2}
$.
Details can be found in~\cite{LTE2017SME}.
The SGD here is analogous to the forward-time Euler-Maruyama approximation since $\mathcal{A}(\epsilon, x, \xi)=f^{\prime}(x, \xi)$.

\subsection{Main results}\label{sec3.2}
We introduce some notations and assumptions.
We use $\|\cdot\|$ to denote the Euclidean two norm if the subscript is not specified.
All vectors are referred to as column vectors.
$f^{\prime}(x, \xi), g^{\prime}(z)$ and $f^{\prime \prime}(x, \xi), g^{\prime \prime}(z)$ refer to the first (gradient) and second (Hessian) derivatives w.r.t.~$x$.
The first assumption is \emph{Assumption I}: $f(x), g$ and for each $\xi, f(x, \xi)$, are closed proper convex functions; $A$ has full column rank.

Let $\mathcal{F}$ be the set of functions of at most polynomial growth.
To apply the SME theory, we need the following \emph{Assumption II} on the smoothness:

\begin{enumerate}
\item[(i)]
The second order derivative $f^{\prime \prime}, g^{\prime \prime}$ are uniformly bounded in $x$, and almost surely in $\xi$ for $f^{\prime \prime}(x, \xi)$; $\mathbb{E}\left\|f^{\prime}(x, \xi)\right\|_{2}^{2}$ is uniformly bounded in $x$;
\item[(ii)]
$f(x), f(x, \xi), g(x)$ and the partial derivatives up to order 5 belong to $\mathcal{F}$;
\item[(iii)]
$f^{\prime}(x)$ and $f^{\prime}(x, \xi)$ satisfy a uniform growth condition: $\left\|f^{\prime}(x)\right\|+\left\|f^{\prime}(x, \xi)\right\| \leq C(1+$ $\|x\|)$ for a constant $C$ independent of $\xi$.
\end{enumerate}
The conditions (ii) and (iii) are inherited from~\cite{Milstein1986}, which can be relaxed to a certain extent without compromising the conclusion here.
For instance, the linear growth condition of the gradient in (iii) is only needed on a compact domain.
Refer to~\cite{LTE2019JMLR} for technical and rigorous improvement for these assumptions on $f$.

With the prior notations and assumptions in hand, we present our first main theorem as follows.
Given the noisy gradient $f^{\prime}(x, \xi)$ and its expectation $f^{\prime}(x)=\mathbb{E}_{\xi} f(x, \xi)$, we define the following covariance matrix
\begin{equation}\label{eq3.8}%\tag{3.8}
\Sigma(x)
=
\sigma(x) \sigma(x)^{\top}
=
\mathbb{E}_{\xi}\left[
(f^{\prime}(x, \xi)-f^{\prime}(x))
(f^{\prime}(x, \xi)-f^{\prime}(x))^{\top}
\right]
\end{equation}
We conclude

\begin{theorem}[SME for G-sADMM]\label{theo3.2}
Let $\alpha \in(0,2)$, $\omega_{1}, \omega \in\{0,1\}$ and $c=\tau / \rho \geq 0$.
Let $\epsilon=\rho^{-1} \in(0,1)$.
$\{x_{k}\}$ denote the sequence of stochastic ADMM~\eqref{eq1.6a}---\eqref{eq1.6c} with the initial choice $z_{0}=A x_{0}$.
Define $X_{t}$ as a stochastic process satisfying the SDE
\begin{equation}\label{eq3.9}%\tag{3.9}
\widehat{M} d X_{t}=-\nabla V(X_{t}) d t+\sqrt{\epsilon} \sigma(X_{t}) d W_{t}
\end{equation}
where the matrix
\begin{equation}\label{eq3.10}%\tag{3.10}
\widehat{M}:=c I+\left(\frac{1}{\alpha}-\omega\right) A^{\top} A
\end{equation}
Then we have $x_{k} \rightarrow X_{k \epsilon}$ in weak convergence of order 1, with the following precise meaning: for any time interval $T>0$ and for any test function $\varphi$ such that $\varphi$ and its partial derivatives up to order 4 belong to $\mathcal{F}$, there exists a constant $C$ such that
\begin{equation}\label{eq3.11}%\tag{3.11}
\left|\mathbb{E} \varphi(X_{k \epsilon})-\mathbb{E} \varphi(x_{k})\right|
\le
C \epsilon
\quad
\forall k \leq\lfloor T / \epsilon\rfloor
\end{equation}
\end{theorem}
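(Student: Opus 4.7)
The overall plan is to apply Milstein's weak convergence theorem (Theorem~\ref{theo3.1}) to the $x$-marginal of the G-sADMM iteration, which reduces the task to extracting an explicit one-step recursion for $x_k$ and matching the first few conditional moments of this recursion to those of the SDE~\eqref{eq3.9}. The initial step is to eliminate the auxiliary variables $z_k$ and $u_k$. The optimality condition for~\eqref{eq1.6b} combined with~\eqref{eq1.6c} yields the identity $u_{k+1}=\epsilon\,g^{\prime}(z_{k+1})$, so $u_k$ is $\mathcal{O}(\epsilon)$ as soon as $z_k$ has polynomial growth. Introducing the primal residual $r_k:=Ax_k-z_k$, a short manipulation of~\eqref{eq1.6c} gives
\begin{equation*}
r_{k+1}=(1-\alpha)r_k+(1-\alpha)A(x_{k+1}-x_k)+(u_{k+1}-u_k),
\end{equation*}
so that, using $r_0=0$ from $z_0=Ax_0$ and the contraction $|1-\alpha|<1$, the residual stays $\mathcal{O}(\epsilon)$ uniformly in $k\leq\lfloor T/\epsilon\rfloor$; this is also the statement later referred to in Corollary~\ref{coro3.1}.

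Next, the first-order condition for the $x$-subproblem~\eqref{eq1.6a}---\eqref{eq1.7}, divided by $\rho$ and using $\tau/\rho=c$, becomes
\begin{equation*}
[cI+(1-\omega)A^{\top}A](x_{k+1}-x_k)=-A^{\top}(r_k+u_k)-\epsilon\bigl[(1-\omega_{1})f^{\prime}(x_{k+1},\xi_{k+1})+\omega_{1}f^{\prime}(x_k,\xi_{k+1})\bigr].
\end{equation*}
Substituting the local-equilibrium approximation $r_k\approx\frac{1-\alpha}{\alpha}A(x_{k+1}-x_k)+\frac{1}{\alpha}(u_{k+1}-u_k)$ obtained from the previous residual recursion converts the left-hand coefficient into $cI+\bigl(1-\omega+\tfrac{1-\alpha}{\alpha}\bigr)A^{\top}A=\widehat{M}$, which is the algebraic mechanism by which the relaxation parameter $\alpha$ enters the continuous-time preconditioner. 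Combined with $u_k=\epsilon\,g^{\prime}(Ax_k)+\mathcal{O}(\epsilon^{2})$, $u_{k+1}-u_k=\mathcal{O}(\epsilon^{2})$, and, in the implicit case $\omega_{1}=0$, a Taylor expansion $f^{\prime}(x_{k+1},\xi_{k+1})=f^{\prime}(x_k,\xi_{k+1})+\mathcal{O}(\epsilon)$ based on the uniform bound on $f^{\prime\prime}$, this produces the explicit expansion
\begin{equation*}
x_{k+1}-x_k=-\epsilon\,\widehat{M}^{-1}\bigl[f^{\prime}(x_k,\xi_{k+1})+A^{\top}g^{\prime}(Ax_k)\bigr]+\mathcal{O}(\epsilon^{2}).
\end{equation*}

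From here the three one-step moment conditions~\eqref{eq3.6a}---\eqref{eq3.6c} of Milstein's theorem and the fourth-moment bound~\eqref{eq3.4} can be checked directly against the It\^o--Taylor expansion of~\eqref{eq3.9} recalled below Theorem~\ref{theo3.1}: the conditional mean equals $-\epsilon\widehat{M}^{-1}\nabla V(x_k)+\mathcal{O}(\epsilon^{2})$ because $\nabla V(x)=f^{\prime}(x)+A^{\top}g^{\prime}(Ax)$, the conditional second moment equals $\epsilon^{2}\widehat{M}^{-1}\Sigma(x_k)\widehat{M}^{-\top}+\mathcal{O}(\epsilon^{3})$ by the definition~\eqref{eq3.8}, and the third-moment bound as well as~\eqref{eq3.4} follow from the polynomial-growth and boundedness hypotheses of Assumption~II together with $\mathbb{E}\|f^{\prime}(x,\xi)\|^{2}$ being uniformly bounded in $x$. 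The main technical obstacle I anticipate is the uniform-in-$k$ control of $(x_k,z_k,u_k,r_k)$ over the full horizon $k\leq\lfloor T/\epsilon\rfloor$: the identity $u_{k+1}=\epsilon g^{\prime}(z_{k+1})$ only becomes useful once blow-up of $z_k$ is ruled out, which feeds back into the $x$-update through $A^{\top}(r_k+u_k)$, and the local-equilibrium approximation for $r_k$ must be justified with an explicit $\mathcal{O}(\epsilon^{2})$ remainder reflecting the slow variation of its driving term. I expect to close this via a Gr\"onwall-type argument combining the geometric contraction $|1-\alpha|<1$ on $r_k$, the polynomial-growth assumptions on $f,g,g^{\prime}$, and a priori $p$-th moment bounds on $\{x_k\}$ in the style of the SME analyses of~\cite{LTE2017SME,LTE2019JMLR}.
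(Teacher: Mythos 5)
Your proposal follows essentially the same route as the paper's proof: eliminate the dual variable via $u_{k+1}=\epsilon g^{\prime}(z_{k+1})$, show the residual $r_k=Ax_k-z_k$ equilibrates to $\frac{1-\alpha}{\alpha}A(x_{k+1}-x_k)+\mathcal{O}(\epsilon^{2})$ (the paper packages this as the $\alpha$-residual estimates~\eqref{eq4.14}--\eqref{eq4.15}), substitute to convert the preconditioner $M$ of~\eqref{eq4.5} into $\widehat{M}$, and verify Milstein's moment conditions. The only real difference is presentational: the paper justifies the quasi-equilibrium of $r_k$ through the auxiliary quantities $r_k^{\alpha}$ and $\widehat{r}_k^{\alpha}$ rather than your contraction/Gr\"onwall phrasing, and your second-moment display omits the $V^{\prime}(x)V^{\prime}(x)^{\top}$ term that appears on both the discrete and SDE sides (it cancels in condition~\eqref{eq3.6b}, so the conclusion is unaffected).
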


\begin{remark}\label{rema3.1}
The stochastic scheme~\eqref{eq1.6a}---\eqref{eq1.6c} is the simplest form of using only one instance of the gradient $f^{\prime}(x, \xi_{k+1})$ in each iteration.
If a batch size larger than one is used, then the one instance gradient $f^{\prime}(x, \xi_{k+1})$ is replaced by the average $\frac{1}{B_{k+1}} \sum_{i=1}^{B_{k+1}} f^{\prime}(x, \xi_{k+1}^{i})$ where $B_{k+1}>1$ is the batch size and $(\xi_{k+1}^{i})$ are $B_{k+1}$ i.i.d.~samples.
Under these settings, $\Sigma$ should be multiplied by a fact $\frac{1}{B_{t}}$ where the continuous-time function $B_{t}$ is the linear interpolation of $B_{k}$ at times $t_{k}=k \epsilon$.
The stochastic modified equation~\eqref{eq3.9} is then in the following form $\widehat{M} d X_{t}=-\nabla V(X_{t}) d t+\sqrt{\frac{\epsilon}{B_{t}}} \sigma(X_{t}) d W_{t}$.
\end{remark}

The proof of the theorem (as long as the upcoming results) is delegated to the next section.
Here we highlight one useful corollary resulting from our proof of the theorem.

\begin{figure}[!tb]
\centering
\begin{subfigure}[b]{0.45\textwidth}
\centering
\includegraphics[width = \textwidth]{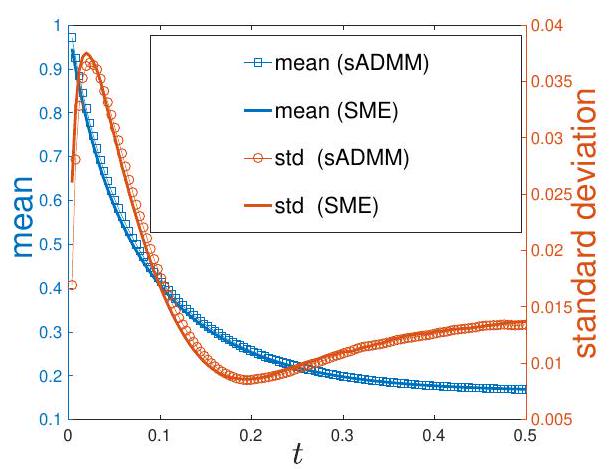}
\caption{The expectation (left axis) and standard deviation (right axis) of $x_{k}$ and $X_{t}$: $\epsilon=2^{-7}, \alpha=1.5$.}
\end{subfigure}
\hfill
\begin{subfigure}[b]{0.45\textwidth}
\centering
\includegraphics[width = \textwidth]{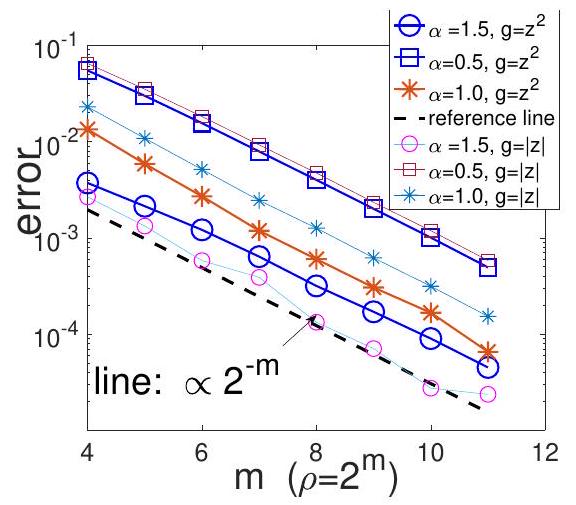}
\caption{The weak convergence error $\mathrm{err}_{m}$ versus $m$ for various $\alpha$ and $\ell_{2}, \ell_{1}$ regularization $g$.}
\end{subfigure}
\caption{%Fig.~3.1: 
The match between the stochastic ADMM and the SME and the verification of the first-order weak approximation. The result is based on the average of $10^{5}$ independent runs: step size $\epsilon=1 / \rho=2^{-m} T$ and $T=0.5$ in (b). The details can be referred to in Section~\ref{sec5}.}
\label{fig3.1}
\end{figure}

\begin{corollary}\label{coro3.1}
For generalized stochastic ADMM, when $\rho \rightarrow \infty$, the residual $r_{k}=$ $A x_{k}-z_{k}$ satisfies the relation: $r_{k+1}=(1-\alpha) r_{k}+\mathcal{O}(\epsilon)$.
So, the necessary condition on the relaxation parameter $\alpha$ for the residual convergence $r_{k} \rightarrow 0$ as $k \rightarrow \infty$ is
$$
|1-\alpha|<1
$$
This corollary matches the empirical range $\alpha \in(0,2)$ proposed for standard ADMM with relaxation~\cite{admm3boyd}.
\end{corollary}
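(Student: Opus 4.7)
The plan is to extract a closed-form recurrence for the residual $r_k$ directly from the primal--dual updates and then bound the error term using Theorem~\ref{theo3.2} together with the optimality condition of the $z$-subproblem. First, I would start from the $u$-update~\eqref{eq1.6c}, which rearranges to $u_{k+1}-u_k = \alpha A x_{k+1}+(1-\alpha)z_k - z_{k+1}$; adding and subtracting $A x_k$ on the right-hand side and collecting terms yields the purely algebraic identity
\begin{equation*}
r_{k+1} \;=\; (1-\alpha)\,r_k \;+\; (1-\alpha)\,A(x_{k+1}-x_k) \;+\; (u_{k+1}-u_k).
\end{equation*}
This reduces the corollary to showing that each of the last two summands is $\mathcal{O}(\epsilon)$.

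Next, the dual iterate is controlled through the $z$-subproblem. Differentiating~\eqref{eq1.6b} in $z$ gives the first-order condition $g'(z_{k+1}) = \rho\bigl(\alpha A x_{k+1} + (1-\alpha)z_k - z_{k+1} + u_k\bigr) = \rho\,u_{k+1}$, so $u_{k+1} = \epsilon\,g'(z_{k+1})$, i.e.\ the dual variable itself lives on the $\mathcal{O}(\epsilon)$ scale. Under the bounded $g''$ and polynomial growth hypotheses of Assumption~II, this immediately yields $u_{k+1}-u_k = \epsilon\bigl(g'(z_{k+1})-g'(z_k)\bigr) = \mathcal{O}(\epsilon)$. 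For the primal increment $A(x_{k+1}-x_k)$, I would invoke Theorem~\ref{theo3.2}: the iterate $x_k$ weakly tracks the SME~\eqref{eq3.9}, whose one-step displacement over $[k\epsilon,(k+1)\epsilon]$ has drift $\mathcal{O}(\epsilon)$ and diffusion of standard deviation $\sqrt{\epsilon}\cdot\sqrt{\epsilon}=\mathcal{O}(\epsilon)$, so $\mathbb{E}\,\|x_{k+1}-x_k\| = \mathcal{O}(\epsilon)$ follows from the one-step moment estimates~\eqref{eq3.4}--\eqref{eq3.6c} already invoked to verify Milstein's hypotheses.

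Combining the two bounds gives $r_{k+1}=(1-\alpha)\,r_k+\mathcal{O}(\epsilon)$. Since by hypothesis $z_0 = A x_0$, hence $r_0=0$, unrolling the recurrence produces $r_k = \mathcal{O}(\epsilon)\sum_{j=0}^{k-1}(1-\alpha)^{k-1-j}$, which is uniformly bounded in $k$ (and contracts $r_k\to 0$ in the limit) exactly when $|1-\alpha|<1$, i.e.\ $\alpha\in(0,2)$, recovering the classical empirical range of Eckstein.

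The main obstacle I anticipate is making "$x_{k+1}-x_k = \mathcal{O}(\epsilon)$" rigorous, since weak convergence by itself only controls $|\mathbb{E}\varphi(x_k)-\mathbb{E}\varphi(X_{k\epsilon})|$ and says nothing pathwise. The way around this is to recycle the fourth-moment one-step bound~\eqref{eq3.4} established in the proof of Theorem~\ref{theo3.2}, which by Cauchy--Schwarz immediately supplies the required $L^2$ control on the one-step primal displacement. Once that ingredient is available, the corollary is essentially a two-line consequence of the algebraic identity above combined with the observation $u_k=\epsilon\,g'(z_k)$.
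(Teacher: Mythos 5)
Your argument is, at its core, the paper's own proof in different clothing. The algebraic identity $r_{k+1}=(1-\alpha)\bigl(r_k+A(x_{k+1}-x_k)\bigr)+(u_{k+1}-u_k)$ is exactly the computation behind the paper's~\eqref{eq4.6} and Proposition~\ref{prop4.1} (substituting $\delta z=\alpha(r_k+A\delta x)-\epsilon(g'(z_{k+1})-g'(z_k))$ into $r_{k+1}=r_k+A\delta x-\delta z$ gives your identity verbatim), and your observation $u_{k+1}=\epsilon\,g'(z_{k+1})$ is precisely the paper's relation $\lambda_{k+1}=g'(z_{k+1})$, so the dual increment $u_{k+1}-u_k=\epsilon\bigl(g'(z_{k+1})-g'(z_k)\bigr)=\mathcal{O}(\epsilon)$ is handled the same way in both arguments.

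The one step where your justification is weaker than the paper's is the bound $A(x_{k+1}-x_k)=\mathcal{O}(\epsilon)$. This cannot be imported from Theorem~\ref{theo3.2} or from the Milstein moment conditions~\eqref{eq3.4}--\eqref{eq3.6c} as a black box: by the one-step expansion~\eqref{eq4.3a}, $x_{k+1}-x_k=-M^{-1}A^{\top}r_k+\mathcal{O}(\epsilon)$, so the primal increment carries an $\mathcal{O}(1)$ contribution proportional to the very residual you are trying to control, and the moment estimates you want to recycle are themselves established in the paper only \emph{after} $r_k=\mathcal{O}(\epsilon)$ has been secured. Bounding the increments first and unrolling the recurrence afterwards therefore runs the logic backwards. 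The repair is the joint induction the paper performs implicitly: $r_0=0$ (from $z_0=Ax_0$) gives $r_1=\mathcal{O}(\epsilon)$ via the unconditional recurrence~\eqref{eq4.6} (which still carries the extra factor $I-AM^{-1}A^{\top}$ at this stage); then $r_k=\mathcal{O}(\epsilon)$ implies $x_{k+1}-x_k=\mathcal{O}(\epsilon)$ by~\eqref{eq4.3a}, which implies $r_{k+1}=(1-\alpha)r_k+\mathcal{O}(\epsilon)$, and the contraction $|1-\alpha|<1$ keeps the constants uniform over $k\le\lfloor T/\epsilon\rfloor$. With that reordering your proposal coincides with the paper's proof.
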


As an illustration of Theorem~\ref{theo3.2} for the first order weak convergence, Figure~\ref{fig3.1}b verifies this rate for various $\alpha$ and even different $g$.
To show that the SME does not only provide the expectation of the solution but also provides the fluctuation of the numerical solution $x_{k}$ for any given $\epsilon$, Figure~\ref{fig3.1}a plots the match of mean and standard deviation of $X_{t}$ from the SME versus the $x_{k}$ from the sADMM.
Section~\ref{sec5} includes more examples and tests for understanding the role of stochastic fluctuation and the relaxation parameter.

Base on~\eqref{eq3.9} in Theorem~\ref{theo3.2}, the simple asymptotic expansion for small $\epsilon$ immediately shows that the standard deviation of the stochastic ADMM $x_{k}$ is $\mathcal{O}(\sqrt{\epsilon})$ and the rescaled two standard deviations $\epsilon^{-1 / 2} \operatorname{std}(x_{k})$ and $\epsilon^{-1 / 2} \operatorname{std}(X_{k \epsilon})$ are close as the functions of the time $t_{k}=k \epsilon$.
The upcoming Section~\ref{sec3.3} discusses this issue and other conclusions on the continuous dynamics of the $z_{k}$ variable satisfying $Z_{t}=A X_{t}+\mathcal{O}(\epsilon)$.

\subsection{Continuous dynamics of $z_{k}$}\label{sec3.3}
Based on the SME~\eqref{eq3.9}, we can find the stochastic asymptotic expansion of $X_{t}^{\epsilon}$ for small $\epsilon$ in any fixed interval $[0, T]$
\begin{equation}\label{eq3.12}%\tag{3.12}
X_{t}^{\epsilon} \approx X_{t}^{0}+\sqrt{\epsilon} X_{t}^{(1 / 2)}+\epsilon X_{t}^{(1)}+\ldots
\end{equation}
See Chapter 2 in~\cite{FW2012} for rigorous justification.
$X_{t}^{0}$ is \emph{deterministic} as the gradient flow of the deterministic problem: $\dot{X}_{t}^{0}=-V^{\prime}(X_{t}^{0})$, $X_{t}^{(1 / 2)}$ and $X_{t}^{(1)}$ are \emph{stochastic} and satisfy certain SDEs independent of $\epsilon$.
The useful conclusion is that the standard deviation of $X_{t}^{\epsilon}$, mainly coming from the term $\sqrt{\epsilon} X_{t}^{(1)}$, is $\mathcal{O}(\sqrt{\epsilon})$.
Hence, the standard deviation of $x_{k}$ from the stochastic ADMM is also $\mathcal{O}(\sqrt{\epsilon})$ and the rescaled two standard deviations $\epsilon^{-1 / 2} \operatorname{std}(x_{k})$ and $\epsilon^{-1 / 2} \operatorname{std}(X_{k \epsilon})$ are close as the functions of the time $t_{k}=k \epsilon$.

We can further investigate the fluctuation of the sequence $z_{k}$ generated by the stochastic ADMM and the modified equation of its continuous version $Z_{t}$.

\begin{theorem}\label{theo3.3}
We have
\begin{enumerate}
\item[(i)]
There exists a deterministic function $h(x, z)$ such that
\begin{equation}\label{eq3.13}%\tag{3.13}
\dot{Z}_{t}^{\epsilon}
=
A \dot{X}_{t}^{\epsilon}+\epsilon h(X_{t}^{\epsilon}, Z_{t}^{\epsilon})
\end{equation}
such that $\{z_{k}\}$ is a weak approximation to $\{Z_{t}^{\epsilon}\}$ with the order 1, where $X_{t}^{\epsilon}$ is the solution to the SME~\eqref{eq3.9}.

\item[(ii)]
In addition, we have the following asymptotic for $Z_{t}^{\epsilon}$:
\begin{equation}\label{eq3.14}%\tag{3.14}
Z_{t}^{\epsilon}
\sim
A X_{t}^{0}+\sqrt{\epsilon} A X_{t}^{(1 / 2)}+\epsilon Z_{t}^{(1)}
\end{equation}
where $Z_{t}^{(1)}$ satisfies $\dot{Z}_{t}^{(1)}=h(X_{t}^{0}, A X_{t}^{0})$ and the standard deviation of $z_{k}$ is on the order $\sqrt{\epsilon}$.
\end{enumerate}
\end{theorem}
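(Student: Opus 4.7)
The strategy is to derive a closed-form recursion for $z_{k+1}-z_k$ from the KKT condition of the $z$-subproblem~\eqref{eq1.6b} together with the dual update~\eqref{eq1.6c}, and then exploit the geometric contraction of the residual $r_k := A x_k - z_k$ (Corollary~\ref{coro3.1}) to slave $r_k$ to an $\mathcal{O}(\epsilon)$ quasi-equilibrium expressible in terms of $x_k, z_k$. This collapses the effective dynamics of $z_k$ onto that of $A x_k$ plus a purely deterministic $\mathcal{O}(\epsilon)$ drift, from which both the ODE coupling~\eqref{eq3.13} and the expansion~\eqref{eq3.14} follow.

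For part~(i), the first-order optimality condition for~\eqref{eq1.6b} combined with~\eqref{eq1.6c} yields the clean identity $u_{k+1} = \epsilon\, g'(z_{k+1})$, and taking differences of~\eqref{eq1.6c} produces the exact recursion
\[
z_{k+1} - z_k = \alpha A(x_{k+1}-x_k) + \alpha r_k - \epsilon\bigl(g'(z_{k+1}) - g'(z_k)\bigr),
\]
and correspondingly $r_{k+1} = (1-\alpha)\bigl(r_k + A(x_{k+1}-x_k)\bigr) + \epsilon\bigl(g'(z_{k+1}) - g'(z_k)\bigr)$. The initialization $z_0 = Ax_0$ forces $r_0 = 0$, and since $|1-\alpha|<1$ with $\mathcal{O}(\epsilon)$ forcing, an induction delivers $r_k = \mathcal{O}(\epsilon)$ uniformly for $k \le \lfloor T/\epsilon\rfloor$. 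A two-term adiabatic expansion of $r_k$ then gives $z_{k+1}-z_k = A(x_{k+1}-x_k) + \epsilon^2\, h(x_k, z_k) + \mathcal{O}(\epsilon^3)$ with an explicit deterministic $h$ built from $A$, $g''$ and the leading-order SME drift $-\widehat{M}^{-1}\nabla V$; in continuous time this is precisely~\eqref{eq3.13}. The weak approximation of order~$1$ then follows by applying Milstein's theorem (Theorem~\ref{theo3.1}) to the joint $(x,z)$ iteration: the moment conditions~\eqref{eq3.6a}--\eqref{eq3.6c} for the $z$-component inherit from those for $x$ already established in Theorem~\ref{theo3.2}, since $z_k$ tracks $A x_k$ up to $\mathcal{O}(\epsilon)$.

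For part~(ii), I would insert the stochastic asymptotic expansion~\eqref{eq3.12} for $X_t^\epsilon$ into the relation $Z_t^\epsilon = A X_t^\epsilon - \epsilon \zeta_t^\epsilon + \mathcal{O}(\epsilon^2)$ derived in part~(i), where the leading-order slaved variable $\zeta_t^0$ depends only on the deterministic $X_t^0$ and $\dot{X}_t^0 = -\widehat{M}^{-1}\nabla V(X_t^0)$. Hence $\zeta_t^\epsilon$ contributes no fluctuation at order $\sqrt{\epsilon}$; the $\sqrt{\epsilon}$-order stochastic term in $Z_t^\epsilon$ is exactly $A X_t^{(1/2)}$, and the $\epsilon$-order term is $Z_t^{(1)} = A X_t^{(1)} + \zeta_t^0$, whose time derivative equals $h(X_t^0, A X_t^0)$ after combining $\dot{\zeta}_t^0$ with the ODE for $X_t^0$. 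The scaling $\operatorname{std}(z_k) = \mathcal{O}(\sqrt{\epsilon})$ then follows immediately from the expansion.

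The main obstacle will be the uniform-in-$k$ control of $r_k$ coupled with the implicit definition of $z_{k+1}$ through $g'(z_{k+1})$. These are resolved by an implicit-function argument using the smoothness of $g$ in Assumption~II, which shows that $z_{k+1}$ is an $\mathcal{O}(\epsilon)$ perturbation of $z_k + A(x_{k+1}-x_k)$; consequently verifying Milstein's fourth-moment condition~\eqref{eq3.4} for the $(x,z)$ iteration reduces to the bounds already established for the $x$-iteration alone in the proof of Theorem~\ref{theo3.2}.
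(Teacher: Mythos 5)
Your proposal follows essentially the same route as the paper: the paper's own proof of Theorem~\ref{theo3.3} simply invokes the residual machinery from the proof of Theorem~\ref{theo3.2} (the recursion from the optimality conditions~\eqref{eq4.2b}, Proposition~\ref{prop4.1}, and the relation~\eqref{eq4.15} yielding $z_{k+1}-z_k = A(x_{k+1}-x_k)+\mathcal{O}(\epsilon^2)$ with a deterministic correction), which is exactly what you re-derive, and your part~(ii) matches the asymptotic-expansion argument of Section~\ref{sec3.3}. Your write-up is in fact more explicit than the paper's four-sentence proof, but the decomposition and key lemmas are the same.
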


We may consider a special case where $g(z)=\frac{1}{2}\|z\|_{2}^{2}$.
Then~\eqref{eq4.2b} becomes
$$
(1+\epsilon) z_{k+1}-z_{k}=\alpha A x_{k+1}+(-\alpha+\epsilon) z_{k}
$$
Recall the residual $r_{k}=A x_{k}-z_{k}$ and in view of~\eqref{eq4.15}, we have the following result that there exists a function $h_{1}$ such that
\begin{equation}\label{eq3.15}%\tag{3.15}
\alpha R_{t}^{\epsilon}
=
(1-\alpha)\left(Z_{t}^{\epsilon}-Z_{t-\epsilon}^{\epsilon}\right)
+
\epsilon^{2} h_{1}\left(X_{t}^{\epsilon}, Z_{t}^{\epsilon}\right)
\end{equation}
and the residual $\{r_{k}\}$ is a weak approximation to $\left\{R_{t}^{\epsilon}\right\}$ with the order 1.
If $\alpha=1$ in the G-sADMM~\eqref{eq1.6a}---\eqref{eq1.6c}, then the expectation and standard deviation of $R_{t}$ and $r_{k}$ are both at order $\mathcal{O}(\epsilon^{2})$.
If $\alpha \neq 1$ in the G-sADMM~\eqref{eq1.6a}---\eqref{eq1.6c}, then the expectation and standard deviation of $R_{t}$ and $r_{k}$ are only at order $\mathcal{O}(\epsilon)$.

\section{Main theorem's and corollary's proofs}\label{sec4}
We provide the proof of our main results in this section.

\paragraph{Proof regarding SME for G-sADMM, Theorem~\ref{theo3.2}.}
The ADMM scheme is the iteration of the triplet $(x, z, \lambda)$ where $\lambda:=\epsilon u$.
In our proof, we shall show this triplet iteration can be effectively reduced to the iteration of $x$ variable only in the form of $x_{k+1}=x_{k}+\epsilon \mathcal{A}(\epsilon, x_{k}, \xi_{k+1})$.
The main approach for this reduction is the analysis of the residual $r_{k}=x_{k}-A z_{k}$ below.
This \emph{one step difference} $x_{k+1}-x_{k}$ will play a central role in the theory of weak convergence, Milstein's theorem (Theorem~\ref{theo3.1} in Section~\ref{sec3.1})~\cite{Milstein1986}.

For notational ease, we drop the random variable $\xi_{k+1}$ in the scheme~\eqref{eq1.6a}---\eqref{eq1.6c}; the readers bear in mind that $f$ and its derivatives do involve $\xi$ and all conclusions hold almost surely for $\xi$.

The optimality conditions for the scheme~\eqref{eq1.6a}---\eqref{eq1.6c} are
\begin{subequations}
\begin{align}
&
\omega_{1} \epsilon f^{\prime}(x_{k})
+
(1-\omega_{1}) \epsilon f^{\prime}(x_{k+1})
+
\epsilon A^{\top} \lambda_{k}
\notag\\&\hspace{1in}
+
A^{\top}\left(\omega A x_{k}+(1-\omega) A x_{k+1}-z_{k}\right)
+
c(x_{k+1}-x_{k})
=0
\label{eq4.1a}%\tag{4.1a}
\\&
\epsilon g^{\prime}(z_{k+1})=\epsilon \lambda_{k}+\alpha A x_{k+1}+(1-\alpha) z_{k}-z_{k+1}
\label{eq4.1b}%\tag{4.1b}
\\&
\epsilon \lambda_{k+1}=\epsilon \lambda_{k}+\alpha A x_{k+1}+(1-\alpha) z_{k}-z_{k+1}
\label{eq4.1c}%\tag{4.1c}
\end{align}
\end{subequations}
Note that due to~\eqref{eq4.1b} and~\eqref{eq4.1c}, the last condition~\eqref{eq4.1c} can be replaced by $\lambda_{k+1}=g^{\prime}(z_{k+1})$.
So, without loss of generality, one can assume that
$$
\lambda_{k} \equiv g^{\prime}(z_{k})
$$
for any integer $k$.
The optimality conditions~\eqref{eq4.1a}---\eqref{eq4.1c} now can be written only in the pair $(x, z)$:
\begin{subequations}
\begin{align}
&
\omega_{1} \epsilon f^{\prime}(x_{k})+(1-\omega_{1}) \epsilon f^{\prime}(x_{k+1})+\epsilon A^{\top} g^{\prime}(y_{k})
\notag\\&\hspace{1in}
+A^{\top}\left(\omega A x_{k}+(1-\omega) A x_{k+1}-z_{k}\right)+c(x_{k+1}-x_{k})=0
\label{eq4.2a}%\tag{4.2a}
\\
&
\epsilon g^{\prime}(z_{k+1})-\epsilon g^{\prime}(z_{k})=\alpha A x_{k+1}+(1-\alpha) z_{k}-z_{k+1}
\label{eq4.2b}%\tag{4.2b}
\end{align}\end{subequations}
We treat $(x_{k+1}, z_{k+1})$ in~\eqref{eq4.2a}---\eqref{eq4.2b} as a function of $x_{k}, z_{k}$ and $\epsilon$ and seek the asymptotic expansion of this function in the regime of small $\epsilon$, by using Dominant Balance technique (see, e.g.~\cite{white2010asymptotic}) with the Taylor expansion for $f^{\prime}(x_{k+1})$ and $g^{\prime}(z_{k+1})$ around $x_{k}$ and $z_{k}$, respectively.
The expansion of the one-step difference up to the order $\epsilon$ is
\begin{subequations}
\begin{align}
x_{k+1}-x_{k}
&=
-M^{-1} A^{\top} r_{k}-\epsilon M^{-1}\left[f^{\prime}(x_{k})+A^{\top} g^{\prime}(z_{k})\right]-\epsilon N_{k} A^{\top} r_{k}+\epsilon^{2} c_{k}
\label{eq4.3a}%\tag{4.3a}
\\
z_{k+1}-z_{k}
&=
\alpha\left(I-A M^{-1} A^{\top}\right) r_{k}+\epsilon c_{k}^{\prime}
\label{eq4.3b}%\tag{4.3b}
\end{align}\end{subequations}
where $r_{k}$ is the \emph{residual}
\begin{equation}\label{eq4.4}%\tag{4.4}
r_{k}:=A x_{k}-z_{k}
,
\end{equation}
and the matrix $M$ is
\begin{equation}\label{eq4.5}%\tag{4.5}
M:=c I+(1-\omega) A^{\top} A
.
\end{equation}
$N_{k}$ in~\eqref{eq4.3a} is a matrix depending on the hessian $f^{\prime \prime}(x_{k})$ and $M$.
$c_{k}$ and $c_{k}^{\prime}$ are uniformly bounded in $k$ (and in random element $\xi$) due to the bound of functions $f^{\prime \prime}$ and $g^{\prime \prime}$ in \emph{Assumption II}.
Throughout the rest of the paper, we shall use the notation $\mathcal{O}\left(\epsilon^{p}\right)$ to denote these terms bounded by a (nonrandom) constant multiplier of $\epsilon^{p}$, for $p=1,2, \ldots$.

~\eqref{eq4.3a}---\eqref{eq4.3b} clearly shows that the dynamics of the pair $(x_{k+1}, z_{k+1})$ depends on the residual $r_{k}=A x_{k}-z_{k}$ in the previous step.
In the limit of $\epsilon=0$, the one step difference (i.e.~\emph{truncation error}) $
x_{k+1}-x_{k}
\rightarrow
-M^{-1} A^{\top} r_{k}$
 which does not vanish unless $r_{k}$ tends to zero.
We turn to the analysis of the residual.
\eqref{eq4.3b} together with~\eqref{eq4.3a} implies
$$
z_{k+1}-z_{k}=-\alpha A M^{-1} A^{\top} r_{k}+\alpha r_{k}+\mathcal{O}(\epsilon)
$$
by absorbing all $\epsilon$ order terms into $\mathcal{O}(\epsilon)$.
By using~\eqref{eq4.2b} for $A x_{k+1}$, we have the recursive relation for residual $r_{k+1}$:
\begin{equation}\label{eq4.6}%\tag{4.6}
\begin{aligned}
r_{k+1}=A x_{k+1}-z_{k+1}
&=
\left(\frac{1}{\alpha}-1\right)\left(z_{k+1}-z_{k}\right)
+
\frac{\epsilon}{\alpha}\left(g^{\prime}(z_{k+1})-g^{\prime}(z_{k})\right)
\\&=
(1-\alpha)\left(I-A M^{-1} A^{\top}\right) r_{k}+\mathcal{O}(\epsilon)
\end{aligned}
\end{equation}
Since $r_{0}=0$ at the initial step by setting $z_{0}=A x_{0}$,~\eqref{eq4.6} shows that $r_{1}=A x_{1}-y_{1}$ become $\mathcal{O}(\epsilon)$ after one iteration.

Next, we have the following important property, particularly with assumption $\alpha=1$, we have a stronger result than~\eqref{eq4.6}.

\begin{proposition}\label{prop4.1}
For any integer $k \geq 0$, if $r_{k}=\mathcal{O}(\epsilon)$, then
\begin{equation}\label{eq4.7}%\tag{4.7}
r_{k+1}
=
\left(1-\alpha+\epsilon \alpha g^{\prime \prime}(z_{k})\right)\left(r_{k}+A(x_{k+1}-x_{k})\right)+\mathcal{O}(\epsilon^{3})
\end{equation}
If $\alpha=1$,~\eqref{eq4.7} reduces to the second order in $\epsilon$:
\begin{equation}\label{eq4.8}%\tag{4.8}
r_{k+1}
=
\epsilon \alpha g^{\prime \prime}(z_{k})\left(r_{k}+A(x_{k+1}-x_{k})\right)
+
\mathcal{O}(\epsilon^{3})
=
\mathcal{O}(\epsilon^{2})
\end{equation}
\end{proposition}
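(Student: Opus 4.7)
The plan is to derive the recursion for $r_{k+1}$ directly from the exact optimality relation~\eqref{eq4.2b}, close the equation via a second-order Taylor expansion of $g'$ about $z_k$, and use the hypothesis $r_k=\mathcal{O}(\epsilon)$ together with~\eqref{eq4.3a}--\eqref{eq4.3b} to control all remainders down to $\mathcal{O}(\epsilon^3)$. The point is that, once $r_k=\mathcal{O}(\epsilon)$, both increments $x_{k+1}-x_k$ and $z_{k+1}-z_k$ are themselves $\mathcal{O}(\epsilon)$, so the quadratic Taylor remainder is $\mathcal{O}(\epsilon^2)$ and becomes $\mathcal{O}(\epsilon^3)$ after being multiplied by the explicit $\epsilon$ in~\eqref{eq4.2b}.

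First I would rearrange~\eqref{eq4.2b} as $z_{k+1}=\alpha A x_{k+1}+(1-\alpha) z_k-\epsilon(g'(z_{k+1})-g'(z_k))$. Subtracting this from $A x_{k+1}$ and from $z_k$ gives the two exact identities
\[
r_{k+1}=(1-\alpha)\Delta_k+\epsilon\bigl(g'(z_{k+1})-g'(z_k)\bigr),\qquad z_{k+1}-z_k=\alpha\Delta_k-\epsilon\bigl(g'(z_{k+1})-g'(z_k)\bigr),
\]
where $\Delta_k:=A x_{k+1}-z_k=r_k+A(x_{k+1}-x_k)$. By~\eqref{eq4.3a}, $r_k=\mathcal{O}(\epsilon)$ forces $x_{k+1}-x_k=\mathcal{O}(\epsilon)$, so $\Delta_k=\mathcal{O}(\epsilon)$; from the second identity above (or directly from~\eqref{eq4.3b}), also $z_{k+1}-z_k=\mathcal{O}(\epsilon)$.

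Next I would Taylor-expand $g'(z_{k+1})-g'(z_k)=g''(z_k)(z_{k+1}-z_k)+\mathcal{O}(\|z_{k+1}-z_k\|^{2})$, invoking the uniform boundedness of $g''$ from \emph{Assumption II}; the remainder is $\mathcal{O}(\epsilon^{2})$, hence $\mathcal{O}(\epsilon^{3})$ after multiplication by $\epsilon$. Substituting into the $z$-increment identity gives the implicit linear equation $(I+\epsilon g''(z_k))(z_{k+1}-z_k)=\alpha\Delta_k+\mathcal{O}(\epsilon^{3})$. Inverting $I+\epsilon g''(z_k)$ by its Neumann series and using $\Delta_k=\mathcal{O}(\epsilon)$ yields $z_{k+1}-z_k=\alpha\Delta_k-\epsilon\alpha g''(z_k)\Delta_k+\mathcal{O}(\epsilon^{3})$, whence $\epsilon(g'(z_{k+1})-g'(z_k))=\epsilon\alpha g''(z_k)\Delta_k+\mathcal{O}(\epsilon^{3})$ (the $\epsilon^{2}g''(z_k)^{2}\Delta_k$ piece is absorbed because $\Delta_k=\mathcal{O}(\epsilon)$). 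Plugging this into the formula for $r_{k+1}$ delivers~\eqref{eq4.7}; specializing $\alpha=1$ kills the $(1-\alpha)\Delta_k$ term and gives~\eqref{eq4.8}, with the $\mathcal{O}(\epsilon^{2})$ bound following immediately from $\Delta_k=\mathcal{O}(\epsilon)$.

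The computation is essentially a Dominant-Balance expansion and contains no genuine obstacle; the only care needed is bookkeeping to verify that every cross-term arising from the implicit inversion and from the quadratic Taylor remainder is indeed $\mathcal{O}(\epsilon^{3})$, which is automatic once one records that $\Delta_k$ and $z_{k+1}-z_k$ are both $\mathcal{O}(\epsilon)$ and that $g''$ is uniformly bounded.
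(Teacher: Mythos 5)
Your proposal is correct and follows essentially the same route as the paper's proof: both solve the implicit relation~\eqref{eq4.2b} for $\delta z = z_{k+1}-z_{k}$ by linearizing $g'(z_{k+1})$ about $z_{k}$, invert $(I+\epsilon g''(z_{k}))$ to first order in $\epsilon$, and substitute back into $r_{k+1}=r_{k}+A(x_{k+1}-x_{k})-(z_{k+1}-z_{k})$, using $r_{k}=\mathcal{O}(\epsilon)$ to make the increments $\mathcal{O}(\epsilon)$ and push all cross-terms to $\mathcal{O}(\epsilon^{3})$. The only minor slip is that the quadratic Taylor remainder $\mathcal{O}(\|z_{k+1}-z_{k}\|^{2})$ requires control of the \emph{third} derivative of $g$ (as the paper explicitly invokes), not merely uniform boundedness of $g''$.
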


\begin{proof}[Proof of Proposition~\ref{prop4.1}]
Since $r_{k}=A x_{k}-z_{k}=\mathcal{O}(\epsilon)$, then the one step difference $x_{k+1}-x_{k}$ and $z_{k+1}-z_{k}$ are both at order $\mathcal{O}(\epsilon)$ because of~\eqref{eq4.3a} and~\eqref{eq4.3b}.
We solve $\delta z:=z_{k+1}-z_{k}$ from~\eqref{eq4.2b} by linearizing the implicit term $g^{\prime}(z_{k+1})$ and use the assumption that the third order derivative of $g$ exits:
$$
\epsilon g^{\prime \prime}(z_{k}) \delta z+\epsilon \mathcal{O}\left((\delta z)^{2}\right)+\delta z=\alpha\left(r_{k}+A \delta x\right)
$$
where $\delta x:=x_{k+1}-x_{k}=\mathcal{O}(\epsilon)$.
Then since $\mathcal{O}\left((\delta z)^{2}\right)=\mathcal{O}(\epsilon^{2})$, the expansion of $\delta z=z_{k+1}-z_{k}$ in $\epsilon$ from the above equation leads to
\begin{equation}\label{eq4.9}%\tag{4.9}
z_{k+1}-z_{k}
=
\alpha\left(1-\epsilon g^{\prime \prime}(z_{k})\right)\left(r_{k}+A(x_{k+1}-x_{k})\right)
+
\mathcal{O}(\epsilon^{3})
\end{equation}
Together with the definition of $r_{k+1}$, the above suggests
$$\begin{aligned}
r_{k+1}
&=
r_{k}+A(x_{k+1}-x_{k})-(z_{k+1}-z_{k})
\\&=
\left(1-\alpha+\epsilon \alpha g^{\prime \prime}(z_{k})\right)\left(r_{k}+A(x_{k+1}-x_{k})\right)
+
\mathcal{O}(\epsilon^{3})
\end{aligned}$$
concluding the proposition.
\end{proof}

We now focus on~\eqref{eq4.3a} for $x_{k+1}-x_{k}$, which can be rewritten as
\begin{equation}\label{eq4.10}%\tag{4.10}
\begin{aligned}
M(x_{k+1}-x_{k})
=&
-
A^{\top} r_{k}
-
\epsilon\left(
f^{\prime}(x_{k})+A^{\top} g^{\prime}(z_{k})
\right)
\\&
+
\epsilon^{2}(1-\omega_{1}) f^{\prime \prime}(x_{k}) M^{-1}\left(
f^{\prime}(x_{k})+A^{\top} g^{\prime}(z_{k})+\frac{1}{\epsilon} A^{\top} r_{k}
\right)
+
\mathcal{O}(\epsilon^{3})
\end{aligned}\end{equation}
This expression does not contain the parameter $\alpha$ explicitly, but the residual $r_{k}=A x_{k}-$ $y_{k}$ significantly depends on $\alpha$ (see Proposition~\ref{prop4.1}).
If $\alpha=1$, then $r_{k}$ is on the order of $\epsilon^{2}$, which leads to the useful fact that there is no contribution from $r_{k}$ to the weak approximation of $x_{k}$ at the order 1.
But for the relaxation case where $\alpha \neq 1$, $r_{k}$ contains the first order term coming from $z_{k+1}-z_{k}$.

To obtain a second order for some "residual" for the relaxed scheme where $\alpha \neq 1$, we need a new definition, $\alpha$-residual, to account for the gap induced by $\alpha$.
Motivated by~\eqref{eq4.1b}, we first define
\begin{equation}\label{eq4.11}%\tag{4.11}
r_{k+1}^{\alpha}
:=
\alpha A x_{k+1}+(1-\alpha) z_{k}-z_{k+1}
\end{equation}
It is connected to the original residual $r_{k+1}$ and $r_{k}$ since it is easy to check that
\begin{equation}\label{eq4.12}%\tag{4.12}
\begin{aligned}
r_{k+1}^{\alpha}
&=
\alpha r_{k+1}+(\alpha-1)(z_{k+1}-z_{k})
\\&=
\alpha r_{k}+\alpha A(x_{k+1}-x_{k})-(z_{k+1}-z_{k})
\end{aligned}\end{equation}
Obviously, when $\alpha=1$, this $\alpha$-residual $r^{\alpha}$ is the original residual $r=A x-y$.
In our proof, we need a modified $\alpha$-residual, denoted by
\begin{equation}\label{eq4.13}%\tag{4.13}
\widehat{r}_{k+1}^{\alpha}
:=
\alpha r_{k}+(\alpha-1)(z_{k+1}-z_{k})
\end{equation}

\begin{figure}[!tb]
\centering
\includegraphics[width = 0.6\textwidth]{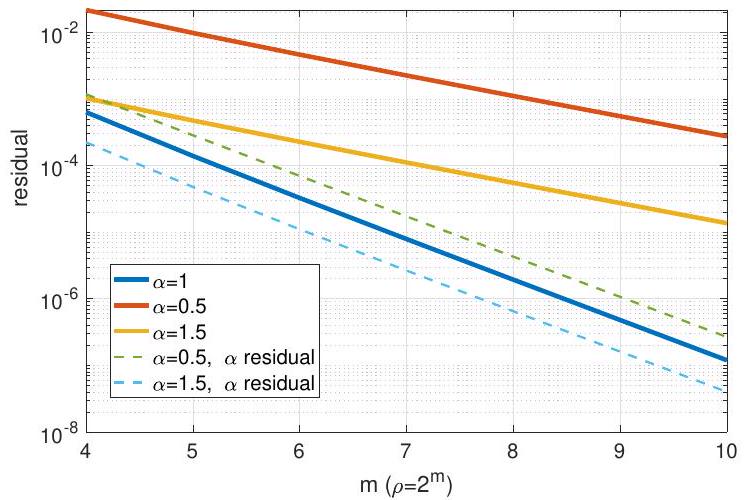}
\caption{%Fig.~4.1: 
Illustration of the order of the residual $r_{k}$ and the $\alpha$-residual $r_{k}^{\alpha}$ defined in~\eqref{eq4.11}.}
\label{fig4.1}
\end{figure}

We shall show that both $r_{k+1}^{\alpha}$ and $\widehat{r}_{k+1}^{\alpha}$ are as small as $\mathcal{O}(\epsilon^{2})$:
$$
r_{k+1}^{\alpha}=\mathcal{O}(\epsilon^{2})
\quad \text { and } \quad
\widehat{r}_{k+1}^{\alpha}=\mathcal{O}(\epsilon^{2})
$$
In fact, by the second equality of~\eqref{eq4.12}, \eqref{eq4.9} becomes $z_{k+1}-z_{k}=\left(1-\epsilon g^{\prime \prime}(z_{k})\right)\left(r_{k+1}^{\alpha}+\right.$ $\left.z_{k+1}-z_{k}\right)+\mathcal{O}(\epsilon^{3})$, so $r_{k+1}^{\alpha}=\epsilon\left(1+\epsilon g^{\prime \prime}(z_{k})\right) g^{\prime \prime}(z_{k})\left(z_{k+1}-z_{k}\right)+\mathcal{O}(\epsilon^{3})$ which is $\mathcal{O}(\epsilon^{2})$ since $z_{k+1}-z_{k}=\mathcal{O}(\epsilon)$.
The difference between $\left(z_{k+1}-z_{k}\right)$ and $\left(z_{k+2}-z_{k+1}\right)$, is at the order $\epsilon^{2}$ due to truncation error of the central difference scheme.
Then we have the conclusion
\begin{equation}\label{eq4.14}%\tag{4.14}
\widehat{r}_{k+1}^{\alpha}
=
\alpha r_{k}+(\alpha-1)\left(z_{k+1}-z_{k}\right)=\mathcal{O}(\epsilon^{2})
\end{equation}
by shifting the subscript $k$ by one.
By~\eqref{eq4.13} and the above proposition, we have $r_{k}=\left(\frac{1}{\alpha}-1\right)\left(z_{k+1}-z_{k}\right)+\mathcal{O}(\epsilon^{2})$.
Furthermore, due to~\eqref{eq4.9}, $
r_{k}
=
\left(\frac{1}{\alpha}-1\right)(z_{k+1}-z_{k})
+
\mathcal{O}(\epsilon^{2})
=
(1-\alpha)\left(r_{k}+A(x_{k+1}-x_{k})\right)
+
\mathcal{O}(\epsilon^{2})
$ which gives
\begin{equation}\label{eq4.15}%\tag{4.15}
r_{k}
=
\left(\frac{1}{\alpha}-1\right)(z_{k+1}-z_{k})
+
\mathcal{O}(\epsilon^{2})
=
\left(\frac{1}{\alpha}-1\right) A(x_{k+1}-x_{k})
+
\mathcal{O}(\epsilon^{2})
\end{equation}
and it follows $z_{k+1}-z_{k}=A(x_{k+1}-x_{k})
+
\mathcal{O}(\epsilon^{2})
$.

\begin{remark}\label{rema4.1}
As an illustration, the figure above for a toy example (see numerical example part in Section~\ref{sec5}) shows that $r_{k} \sim \mathcal{O}(\epsilon)$ for $\alpha \neq 1$ while $r_{k} \sim \mathcal{O}(\epsilon^{2})$ for $\alpha=1$ and the $\alpha$-residual $r_{k}^{\alpha}$ is $\mathcal{O}(\epsilon^{2})$ regardless of $\alpha$ (the solid lines are $r_{k}$ and the dashed lines are $r_{k}^{\alpha}$).
\end{remark}

With the above preparations for the residual analysis, we now apply Theorem~\ref{theo3.1} to show the main conclusion of our theorem.
Combining~\eqref{eq4.10} and~\eqref{eq4.15}, and noting the Taylor expansion of $g^{\prime}(z_{k}): g^{\prime}(y_{k})=g^{\prime}\left(A x_{k}-r_{k}\right)=g^{\prime}\left(A x_{k}\right)+\mathcal{O}(\epsilon)$ since $r_{k}=\mathcal{O}(\epsilon)$, and now putting back the random element $\xi$ into $f^{\prime}$, we have
\begin{equation}\label{eq4.16}%\tag{4.16}
\begin{aligned}
M(x_{k+1}-x_{k})
=&
-
\epsilon\left(f^{\prime}\left(x_{k}, \xi_{k+1}\right)
+
A^{\top} g^{\prime}\left(A x_{k}\right)\right)
\\&
-
\left(\frac{1}{\alpha}-1\right) A^{\top} A(x_{k+1}-x_{k})
+
\mathcal{O}(\epsilon^{2})
\end{aligned}\end{equation}
For convenience, introduce the matrix
\begin{equation}\label{eq4.17}%\tag{4.17}
\widehat{M}
:=
M+\frac{1-\alpha}{\alpha} A^{\top} A=c+\left(\frac{1}{\alpha}-\omega\right) A^{\top} A
\end{equation}
and let $\widehat{x}_{k}:=\widehat{M} x_{k}$, $\delta \widehat{x}_{k+1}=\widehat{M}(x_{k+1}-x_{k})$.
Then
$$
\delta \widehat{x}
=
-
\epsilon V^{\prime}(x, \xi)
+
\epsilon^{2}\left((1-\omega_{1}) f^{\prime \prime} M^{-1} V^{\prime}(x)-A^{\top} \theta\right)
+
\mathcal{O}(\epsilon^{3})
$$
The final step is to compute the momentums and verify the conditions in Milstein's theorem (Theorem~\ref{theo3.1}) on these momentums.
We have
\begin{enumerate}
\item[(i)]
\begin{equation}\label{eq4.18}%\tag{4.18}
\mathbb{E}[\delta \widehat{x}]
=
-\epsilon \mathbb{E} V^{\prime}(x, \xi)+\mathcal{O}(\epsilon^{2})=-\epsilon V^{\prime}(x)+\mathcal{O}(\epsilon^{2})
\end{equation}
\item[(ii)]
$$\begin{aligned}
\mathbb{E}\left[\delta \widehat{x} \delta \widehat{x}^{\top}\right]
&=
\epsilon^{2} \mathbb{E}\left[
(f^{\prime}(x, \xi)+A^{\top} g^{\prime}(x))
(f^{\prime}(x, \xi)+A^{\top} g^{\prime}(x))^{\top}
\right]
+\mathcal{O}(\epsilon^{3})
\\&=
\epsilon^{2}(V^{\prime}(x) V^{\prime}(x)^{\top})
+
\epsilon^{2} \mathbb{E}\left[
(f^{\prime}(x, \xi)-f^{\prime}(x))
(f^{\prime}(x, \xi)-f^{\prime}(x))^{\top}
\right]
+
\mathcal{O}(\epsilon^{3})
\end{aligned}$$
\item[(iii)]
It is trivial that $
\mathbb{E}\left[\Pi_{j=1}^{s} \delta x_{i_{j}}\right]
=
\mathcal{O}(\epsilon^{3})
$ for $s \geq 3$ and $i_{j}=1, \ldots, d$.
\end{enumerate}
This in all completes the proof of Theorem~\ref{theo3.2}.

\hfill\qed

\begin{remark}\label{rema4.2}
We do not pursue the second-order weak approximation as for the SGD scheme, due to the complicated issue of the residuals.
In addition, the proof requires a regularity condition for the functions $f$ and $g$.
So, our theoretic theorems can not cover the non-smooth function $g$.
Our numerical tests in Section~\ref{sec5} suggest that the conclusion holds too for $l_{1}$ regularization function $g(z)=\|z\|_{1}$.
\end{remark}

\begin{remark}\label{rema4.3}
In general applications, it can be difficult to obtain the explicit expression of the covariance matrix $\Sigma(x)$ as a function of $x$, except in very few simplified cases.
In applications of empirical risk minimization, the function $f$ is the empirical average of the loss on each sample $f_{i}: f(x)=\frac{1}{N} \sum_{i=1}^{N} f_{i}(x)$.
The diffusion's covariance matrix $\Sigma(x)$ in~\eqref{eq3.8} becomes the following form
\begin{equation}\label{eq4.19}%\tag{4.19}
\Sigma_{N}(x)
=
\frac{1}{N} \sum_{i=1}^{N}
\left(f^{\prime}(x)-f_{i}^{\prime}(x)\right)
\left(f^{\prime}(x)-f_{i}^{\prime}(x)\right)^{\top}
\end{equation}
It is clear that if $f_{i}(x)=f(x, \xi_{i})$ with $N$ i.i.d.~samples $\xi_{i}$, then $\Sigma_{N}(x) \rightarrow \Sigma(x)$ as $N \rightarrow \infty$.
\end{remark}

\paragraph{Proof of Corollary~\ref{coro3.1}}
Based on Proposition~\ref{prop4.1},
$$
r_{k+1}
=
(1-\alpha)\left(r_{k}+A(x_{k+1}-x_{k})\right)
+
\epsilon \alpha g^{\prime \prime}(z_{k})\left(r_{k}+A(x_{k+1}-x_{k})\right)
+
\mathcal{O}(\epsilon^{3})
$$
Since $g^{\prime \prime}$ are bounded and $x_{k+1}-x_{k}=\mathcal{O}(\epsilon)$, then $r_{k+1}=(1-\alpha) r_{k}+\mathcal{O}(\epsilon)$, with the leading term $(1-\alpha) r_{k}$.
Therefore when $k \rightarrow \infty$, we need $|1-\alpha|<1$ for $r_{k}$ to converge to zero.
This completes the proof.

\hfill\qed

\paragraph{Proof of Theorem~\ref{theo3.3}}
~\eqref{eq4.15} and~\eqref{eq4.3b} indicate that
$$
z_{k+1}-z_{k}=A(x_{k+1}-x_{k})+\mathcal{O}(\epsilon^{2})
$$
Therefore $\dot{Z}_{t}^{\epsilon}=A X_{t}^{\epsilon}+\epsilon$.
The function $h$ is deterministic because the update of $z_{k+1}$ in~\eqref{eq4.2b} is deterministic conditioned on the given $x_{k+1}$.
The function $h$ depends on $g^{\prime \prime}$ and others, which is quite challenging to obtain the explicit expression, completing the proof.

\hfill\qed

\section{Numerical examples}\label{sec5}
We show some remarks first to elaborate our main theorem in practical settings before we present our illustrative examples.

In empirical risk minimization problems, the function $f$ is the empirical average of the loss on each sample $f_{i}$: $
f(x)=\frac{1}{N} \sum_{i=1}^{N} f_{i}(x)
$.
The diffusion matrix $\Sigma(x)$ in~\eqref{eq3.8} can be written as the following:
\begin{equation}\label{eq5.1}%\tag{5.1}
\Sigma_{N}(x)
=
\frac{1}{N} \sum_{i=1}^{N}\left(f^{\prime}(x)-f_{i}^{\prime}(x)\right)\left(f^{\prime}(x)-f_{i}^{\prime}(x)\right)^{\top}
\end{equation}
If $f_{i}(x)=f(x, \xi_{i})$ with $N$ i.i.d.~samples $\xi_{i}$, then $\Sigma_{N}(x) \rightarrow \Sigma(x)$ as $N \rightarrow \infty$.
In addition, the stochastic scheme~\eqref{eq1.6a}---\eqref{eq1.6c} uses only one instance of the gradient $f^{\prime}(x, \xi_{k+1})$ in each iteration.
Under cases where more than one samples are computed in each iteration, then the one instance gradient $f^{\prime}(x, \xi_{k+1})$ is replaced by the batch gradient $\frac{1}{B_{k+1}} \sum_{i=1}^{B_{k+1}} f^{\prime}(x, \xi_{k+1}^{i})$ where $B_{k+1}\ge 1$ is the batch size and $(\xi_{k+1}^{i})$ are $B_{k+1}$ i.i.d.~samples.
Under these settings, $\Sigma$ should be multiplied by a factor $\frac{1}{B_{t}}$ where the continuous-time function $B_{t}$ is the linear interpolation of $B_{k}$ at times $t_{k}=k \epsilon$.
The stochastic modified equation~\eqref{eq3.9} is then of the following form
\begin{equation}\label{eq5.2}%\tag{5.2}
\widehat{M} d X_{t}
=
-\nabla V(X_{t}) d t+\sqrt{\frac{\epsilon}{B_{t}}} \sigma(X_{t}) d W_{t}
\end{equation}

\subsection{A toy example}\label{sec5.1}
In~\eqref{eq1.2}, consider $f(x, \xi)=(\xi+1) x^{4}+(2+\xi) x^{2}-(1+\xi) x$, where $\xi$ is a Bernoulli random variable taking values $-1$ or +1 with equal probability.
For $g(z)$ we consider both of the cases of $g(z)=z^{2}$ and $g(z)=|z|$, and we fix the matrix $A=I$.
Recall in the stochastic ADMM~\eqref{eq1.6a}---\eqref{eq1.6c}, $c=\tau / \rho$ and $\omega \in[0,1]$.
We choose $c=\omega=1$ such that $\widehat{M}=\frac{1}{\alpha} I$.
When $g(z)=z^{2}$, the optimum solution of~\eqref{eq1.2} is $x_{*}=0.16374$ and the SME is
\begin{equation}\label{eq5.3}%\tag{5.3}
\frac{1}{\alpha} d X_{t}=-\left(4 x^{3}+6 x-1\right) d t+\sqrt{\epsilon}\left|4 x^{3}+2 x-1\right| d W_{t}
\end{equation}
It is easy to verify that these settings satisfy the assumptions in our main theorem.

Throughout the analyses below, we also numerically investigated the convergence rate for the non-smooth penalty $g(z)=|z|$, even though this $\ell_{1}$ regularization function does not satisfy our assumptions in theory.
For the corresponding SDE, formally we can write
$$
\frac{1}{\alpha} d X_{t}
=
-\left(4 x^{3}+4 x-1+\operatorname{sign}(x)\right) d t+\sqrt{\epsilon}\left|4 x^{3}+2 x-1\right| d W_{t}
$$
by using the sign function for $g^{\prime}(z)$.
We only use the above SME for this simulation.
The rigorous expression requires the concept of stochastic differential inclusion, which is out of the scope of this work.

We initialize $x_{0}, z_{0}, \lambda_{0}$ as $x_{0}=z_{0}=1.0$ and $\lambda_{0}=g^{\prime}\left(z_{0}\right)$ and set the terminal time as $T=0.5$.

\paragraph{Convergence between generalized stochastic ADMM and its continuous model.}
First of all, we verify that the SME solution not only provides the approximation of the expectation of the G-sADMM~\eqref{eq1.6a}---\eqref{eq1.6c} solution, but also depicts the fluctuation of the numerical solution $x_{k}$ of~\eqref{eq1.6a}---\eqref{eq1.6c} for any given $\epsilon$.
In Figure~\ref{fig3.1}a we compare the mean and standard deviation ("std") between $x_{k}$ and $X_{k \epsilon}$ for $\epsilon=2^{-7}$ and under $g(z)=z^{2}$ as an illustration of our argument.
The left vertical axis together with two blue curves depicts the means of the two solution paths and the right vertical axis with two red curves are the standard deviations of the two solution paths respectively.
We see from Figure~\ref{fig3.1}a that both the mean curves of the SME/GsADMM and the std curves of the SME/GsADMM coincide with each other.

\begin{figure}[!tb]
\centering
\begin{subfigure}[b]{0.47\textwidth}
\centering
\includegraphics[width = \textwidth]{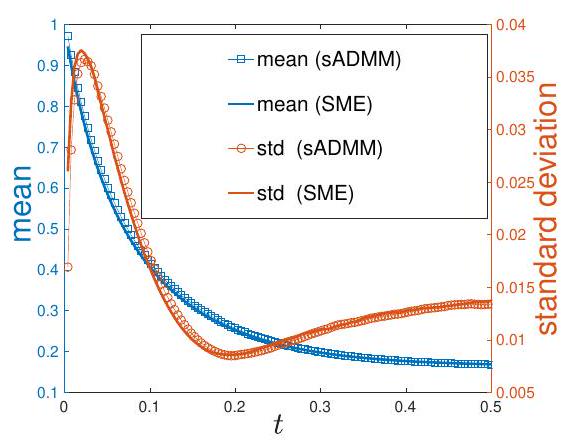}
\caption{The expectation (left axis) and standard deviation (right axis) of $x_{k}$ and $X_{t}$; $\epsilon=2^{-7}; \alpha=1.5$.}
\end{subfigure}
\hfill
\begin{subfigure}[b]{0.43\textwidth}
\centering
\includegraphics[width = \textwidth]{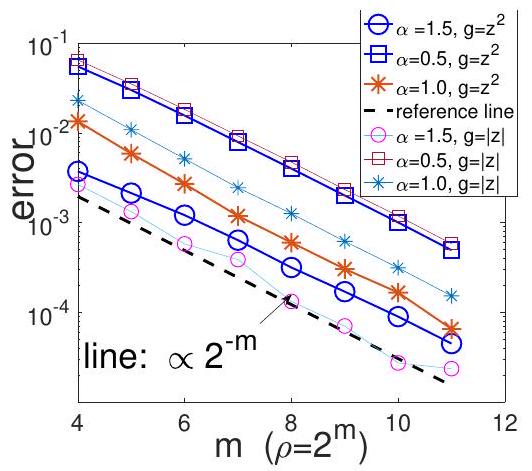}
\caption{The weak convergence error err$_{m}$ versus $m$ for various $\alpha$ and $\ell_{2}, \ell_{1}$ regularization $g$.}
\end{subfigure}
\caption{%Fig.~5.1: 
The match between the stochastic ADMM and the SME and the verification of the first-order weak approximation. The result is based on the average of $10^{5}$ independent runs. The step size $\epsilon=1 / \rho=2^{-m} T$ and $T=0.5$ in (b). The details can be referred to in Section~\ref{sec5}.}
\label{fig5.1}
\end{figure}

\begin{figure}[!tb]
\centering
\includegraphics[width = \textwidth]{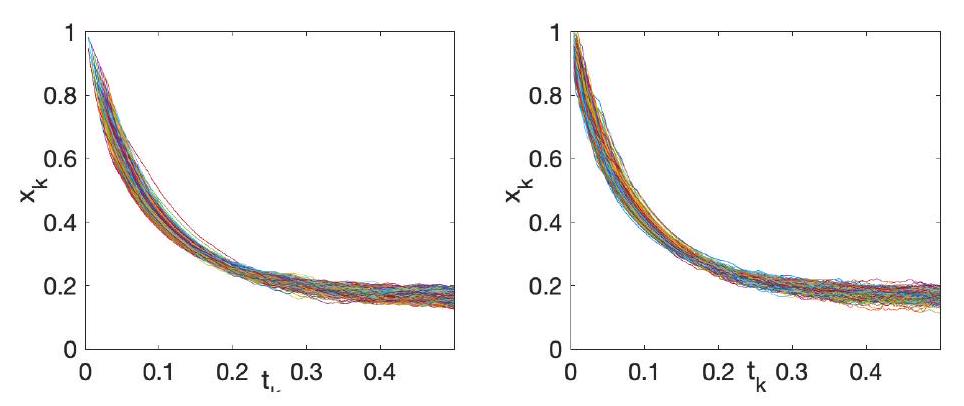}
\caption{%Fig.~5.2: 
The 400 sample trajectories from stochastic ADMM (left) and SME (right).}
\label{fig5.2}
\end{figure}

In Figure~\ref{fig3.1}b we plot the order of weak convergence in Theorem~\ref{theo3.2}.
We choose a test function $\varphi(x)=x+x^{2}$ to evaluate the weak convergence error:
$$
\text { err }
:=
\max _{1 \leq k \leq\lfloor T / \epsilon\rfloor}
\left|\mathbb{E} \varphi(x_{k})-\mathbb{E} \varphi(X_{k \epsilon})\right|
$$
For each $m=4,5, \ldots, 11$, we set $\rho=2^{m} / T$ and $\epsilon=T 2^{-m}$.
We plot $m$ as the $x$-axis and $\operatorname{err}_{m}$ as the $y$-axis in Figure~\ref{fig3.1}b and apply the semi-log scale for both three values of relaxation parameter $\alpha$ and two cases of $g(z)$.
We see that $\operatorname{err}_{m}$ satisfies~\eqref{eq3.11} in Theorem~\ref{theo3.2} and thus verifies the weak convergence of order one.

In addition, we plot 400 sample trajectories of the GsADMM together in Figure~\ref{fig5.2}, left panel and 400 sample trajectories of the SME together in Figure~\ref{fig5.2}, right panel, with $g(z)=z^{2}$ and $\alpha=1.5$.
This further shows the match between the mean and the std between the solution paths of the stochastic ADMM and the SME.

\begin{figure}[!tb]
\centering
\includegraphics[width = 0.6\textwidth]{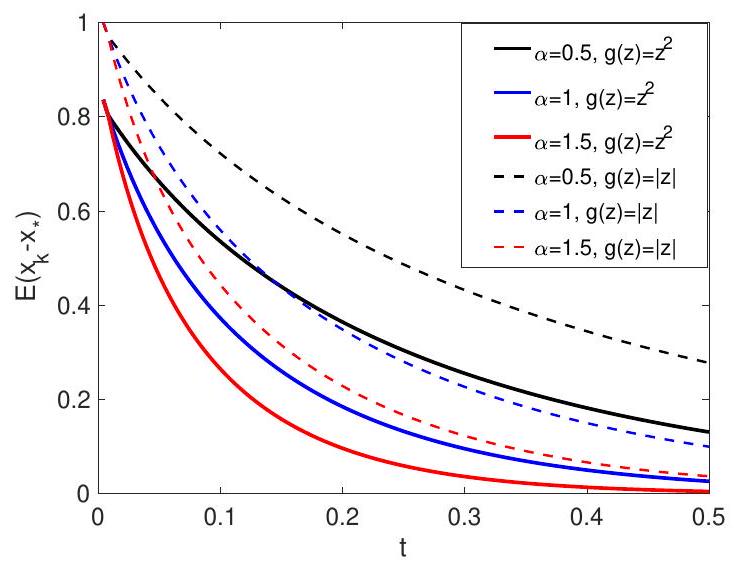}
\caption{%Fig.~5.3: 
The expectation of the error to true minimizer $x_{k}-x_{*}$ when $\alpha \in(0,2)$ varies. The result is based on the average of 10000 runs of stochastic ADMM sequences.}
\label{fig5.3}
\end{figure}

\begin{figure}[!tb]
\includegraphics[width = \textwidth]{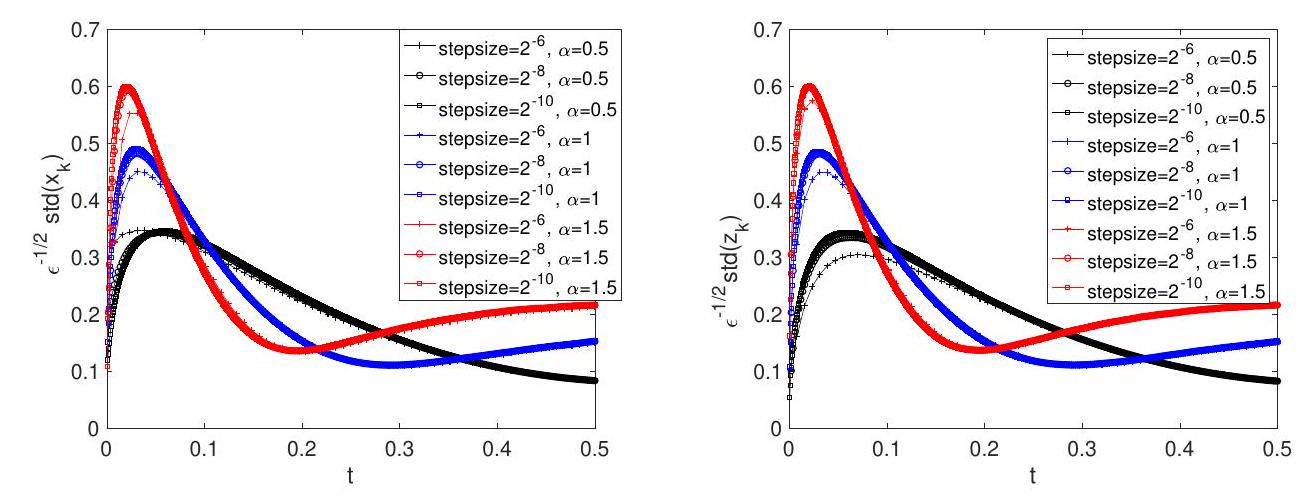}
\caption{%Fig.~5.4: 
The rescaled std of $x_{k}$ and $z_{k}$.}
\label{fig5.4}
\end{figure}

\paragraph{The effect of $\alpha$.}
In Figure~\ref{fig5.3}, we explore how $\alpha$ affects and accelerate the optimization of the stochastic ADMM~\eqref{eq1.6a}---\eqref{eq1.6c}.
The continuous model~\eqref{eq5.3} supports this observation here that a large $\alpha$ helps fast decay toward the true minimizer.
Similar effects have been described for the deterministic ADMM in~\cite{pmlr-v97-yuan19c}.

\paragraph{The fluctuation of $x_{k}$ and $z_{k}$.}
Next, we test the orders of the standard deviation of $x_{k}$ and $z_{k}$.
Previously in Section~\ref{sec3.3} (see~\eqref{eq3.12} and~\eqref{eq3.14}), we mentioned that theoretically the standard deviation of the sequences $x_{k}$ and $z_{k}$ are of order $\sqrt{\epsilon}$.
So the two sequences $\epsilon^{-1 / 2} \operatorname{std}(x_{k})$ and $\epsilon^{-1 / 2} \operatorname{std}(z_{k})$ should only depends on $\alpha$ regardless of $\epsilon$, which is shown in Figure~\ref{fig5.4}.

\begin{figure}[!tb]
\centering
\begin{subfigure}[b]{0.32\textwidth}
\centering
\includegraphics[width = \textwidth]{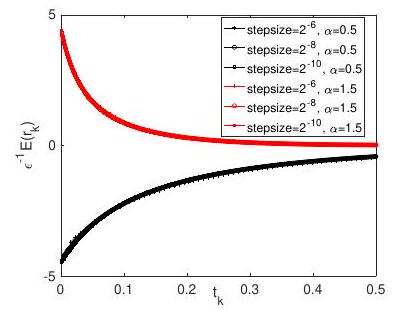}
\caption{$\alpha\ne 1$.~$g(z)=z^{2}$.}
\end{subfigure}
\hfill
\begin{subfigure}[b]{0.32\textwidth}
\centering
\includegraphics[width = \textwidth]{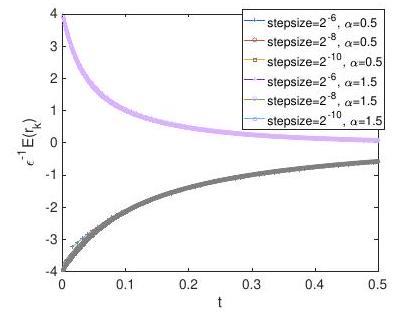}
\caption{$\alpha\ne 1$.~$g(z)=|z|$.}
\end{subfigure}
\hfill
\begin{subfigure}[b]{0.32\textwidth}
\centering
\includegraphics[width = \textwidth]{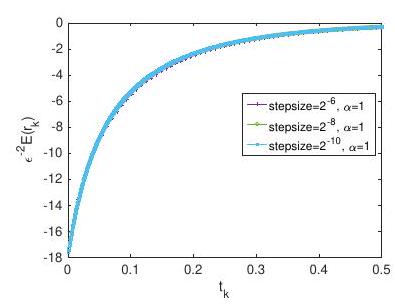}
\caption{$\alpha=1$.~$g(z)=z^{2}$.}
\end{subfigure}
\begin{subfigure}[b]{0.32\textwidth}
\centering
\includegraphics[width = \textwidth]{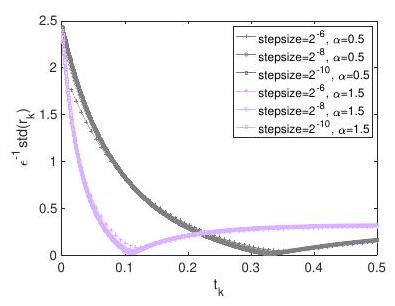}
\caption{$\alpha\ne 1$.~$g(z)=z^{2}$.}
\end{subfigure}
\hfill
\begin{subfigure}[b]{0.32\textwidth}
\centering
\includegraphics[width = \textwidth]{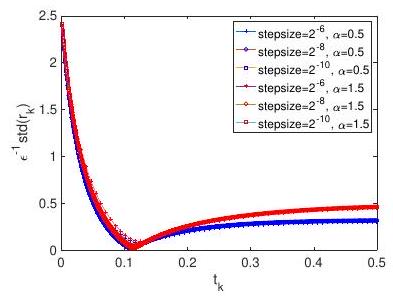}
\caption{$\alpha \neq 1$.~$g(z)=|z|$.}
\end{subfigure}
\hfill
\begin{subfigure}[b]{0.32\textwidth}
\centering
\includegraphics[width = \textwidth]{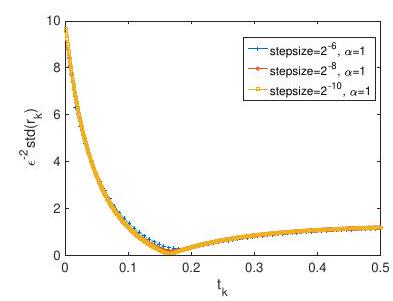}
\caption{$\alpha=1$.~$g(z)=z^{2}$.}
\end{subfigure}
\caption{%Fig.~5.5: 
The scaling of the mean (first row) and std (second row) of residual $r_{k}$: for $\alpha \neq 1$, both are on the order $\epsilon^{-1}$ while for $\alpha=1$, both are on the order $\epsilon^{-2}$.}
\label{fig5.5}
\end{figure}

\begin{figure}[!tb]
\centering
\includegraphics[width = 0.6\textwidth]{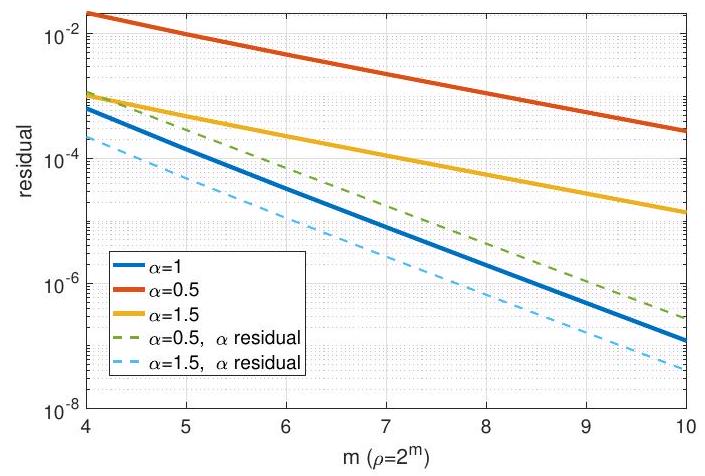}
\caption{%Fig.~5.6: 
Illustration of the order of the residual $r_{k}$ and the $\alpha$-residual $r_{k}^{\alpha}$ defined in~\eqref{eq4.11}.}
\label{fig5.6}
\end{figure}

\paragraph{The scale of the residual $r_{k}$ at $\alpha=1$ and $\alpha \neq 1$.}
Finally, note that Proposition~\ref{prop4.1} in the proof of Theorem~\ref{theo3.2} and its continuous counterpart~\eqref{eq3.15} in Section~\ref{sec3.3} indicate that the residual $r_{k}$ is $\mathcal{O}(\epsilon^{2})$ if $\alpha=1$ and $\mathcal{O}(\epsilon)$ if $\alpha \neq 1$.
To numerically investigate the scaling of the residual $r_{k}$, we study both its expectation and its standard deviation.
Figure~\ref{fig5.5} plots the two quantities $\epsilon^{-1} \mathbb{E}(r_{k})$ and $\epsilon^{-1} \operatorname{std}(r_{k})$ when $\alpha \neq 1$ against the time $t_{k}=k \epsilon$ to verify that both $\mathbb{E} r_{k}$ and $\operatorname{std} r_{k}$ are of order $\epsilon^{-1}$ when $\alpha \neq 1$.
When $\alpha=1$, we plot $\epsilon^{-2} \mathbb{E}(r_{k})$ and $\epsilon^{-2} \operatorname{std}(r_{k})$ to show that both $\mathbb{E} r_{k}$ and std $r_{k}$ are of order $\epsilon^{-2}$.
Moreover, as mentioned in Remark~\ref{rema4.1}, the new definition of the residual, $\alpha$-residual $r_{k}^{\alpha}$ is $\mathcal{O}(\epsilon^{2})$ regardless of $\alpha$, as confirmed numerically by Figure~\ref{fig5.6}.

\subsection{Generalized ridge and lasso regression}\label{sec5.2}
In this subsection, we perform experiments on the generalized ridge/lasso regression:
\begin{equation}\label{eq5.4}%\tag{5.4}
\begin{aligned}
&
\operatorname{minimize}_{x \in \mathbb{R}^{d}, z \in \mathbb{R}^{m}}
&&
\frac{1}{2} \mathbb{E}_{\boldsymbol{\xi}}(\xi_{in}^{\top} x-\xi_{obs})^{2}+g(z)
\\
&
\text{subject to}
&&
A x-z=0
\end{aligned}
\end{equation}
where $g(z)=\frac{1}{2} \beta\|z\|_{2}^{2}$ (ridge regression) or $g(z)=\beta\|z\|_{1}$ (lasso regression), with a constant $\beta>0$.
$A$ is a penalty matrix specifying the desired structured pattern of $x$.
Among the random variables $\xi=\left(\xi_{i n}, \xi_{obs}\right) \in \mathbb{R}^{d+1}, \xi_{i n}$ is a zero-mean random (column) vector uniformly distributed over the hyper-cube $[-0.5,0.5]^{d}$ with independent components.
The labelled output data $\xi_{obs}:=\xi_{i n}^{\top} v+\zeta$, where $v \in \mathbb{R}^{d}$ is a known vector and $\zeta=\mathcal{N}(0, \sigma_{\zeta}^{2})$ is the zero-mean measurement noise which is independent of $\xi_{i n}$.
For this regression problem, we have
$$
f(x, \boldsymbol{\xi})
=
\frac{1}{2}|\xi_{i n}^{\top} x-\xi_{obs}|^{2}
=
\frac{1}{2}|\xi_{i n}^{\top}(x-v)-\zeta|^{2}
$$
and
$$
f(x)
=
\mathbb{E} f(x, \boldsymbol{\xi})
=
\frac{1}{2}(x-v)^{\top} \Omega(x-v)+\frac{1}{2} \sigma_{\zeta}^{2}
$$
where $\Omega$ is the covariance matrix $\Omega=\mathbb{E}\left[\xi_{i n} \xi_{i n}^{\top}\right]=\frac{1}{12} I$.
Thus, $f^{\prime}(x, \boldsymbol{\xi})=\xi_{i n} \xi_{i n}^{\top}(x-v)-\xi_{i n} \zeta$ and $f^{\prime}(x)=\mathbb{E} f^{\prime}(x, \boldsymbol{\xi})=\Omega(x-v)$ The true solution of the ridge regression with $g(z)=$ $\frac{1}{2} \beta\|z\|_{2}^{2}$ is $x_{*}=\left(\Omega+\beta A^{\top} A\right)^{-1} \Omega v$.
The analytic expression of the diffusion matrix $\Sigma(x)$ can be written in terms of the fourth-order momentum of the stochastic vector $\xi_{i n}$:
$$
\Sigma(x)
=
\mathbb{E}\left[((x-v)^{\top} \xi_{in})^{2} \xi_{in} \xi_{in}^{\top}\right]
-
\Omega(x-v)(x-v)^{\top} \Omega+\sigma_{\zeta}^{2} \Omega
$$

As mentioned at the beginning of this Section~\ref{sec5}, we can use a batch size $B$ for the stochastic ADMM.
Then the corresponding SME~\eqref{eq5.2} for the ridge regression problem is
$$
\widehat{M} d X_{t}
=
-\Omega\left(X_{t}-v\right) d t
-\beta A^{\top} A X_{t} d t
+\sqrt{\frac{\epsilon}{B}} \Sigma^{1 / 2}(X_{t}) d W_{t}
$$
The stochastic modified differential inclusion for the lasso regression (formally) is
$$
\widehat{M} d X_{t}
\in
-\Omega\left(X_{t}-v\right) d t
-\frac{1}{2} \beta A^{\top} \operatorname{sign}\left(A X_{t}\right) d t
+\sqrt{\frac{\epsilon}{B}} \Sigma^{1 / 2}(X_{t}) d W_{t}
$$
To solve these SDEs, the main computational bottleneck is the computation of the matrix $\Sigma(x)$.
We can reduce this overhead by using a sample version for the covariance matrix $\Sigma(x)$ as shown~\eqref{eq5.1}.
At each $x, \Sigma_{N}(x)$ is computed by $N$ samples of the random $\xi$.
A small $N$ has a large gain in computational efficiency with the potential negative impact on the accuracy.
However, our extensive numerical tests show that even for a small $N$, the weak approximation of $X_{t}$ is still extraordinarily good.
$N=9$ is used for our results reported here when the numerical solution of the SME is involved.

\begin{figure}[!tb]
\centering
\begin{subfigure}[b]{0.45\textwidth}
\centering
\includegraphics[width = \textwidth]{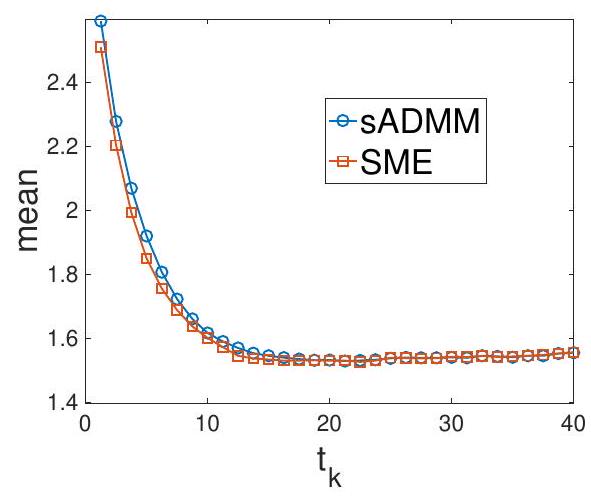}
\caption{$m=5$ (i.e., $\rho=2^{m} / T=0.8, \epsilon=\rho^{-1}=1.25$)}
\end{subfigure}
\hfill
\begin{subfigure}[b]{0.45\textwidth}
\centering
\includegraphics[width = \textwidth]{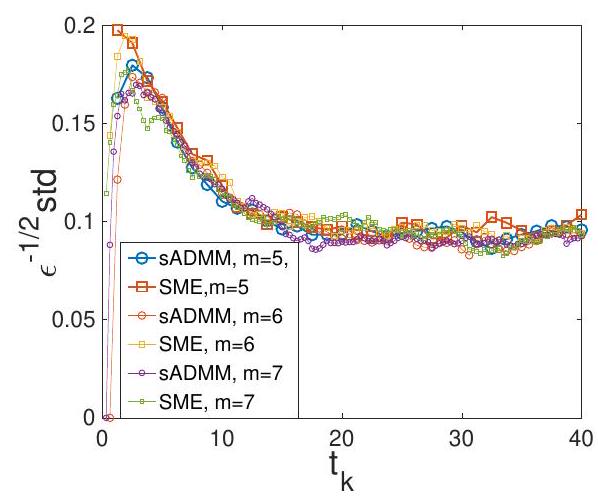}
\caption{The rescaled standard deviation of $\varphi(x_{k})$ and $\varphi(X_{k \epsilon})$ for $m=5,6,7$}
\end{subfigure}
\caption{%Fig.~5.7: 
The mean and standard deviation of $\varphi(x_{k})$ (sADMM) and $\varphi(X_{k \epsilon})$ (SME). $\alpha=1.5$, $c=1$, $\omega=1$, $\omega_{1}=1$; $T=40$; $g(z)=\frac{1}{2} \beta\|z\|_{2}^{2}$. The results are based on 400 independent runs.}
\label{fig5.7}
\end{figure}

\begin{figure}[!tb]
\centering
\begin{subfigure}[b]{0.45\textwidth}
\centering
\includegraphics[width = \textwidth]{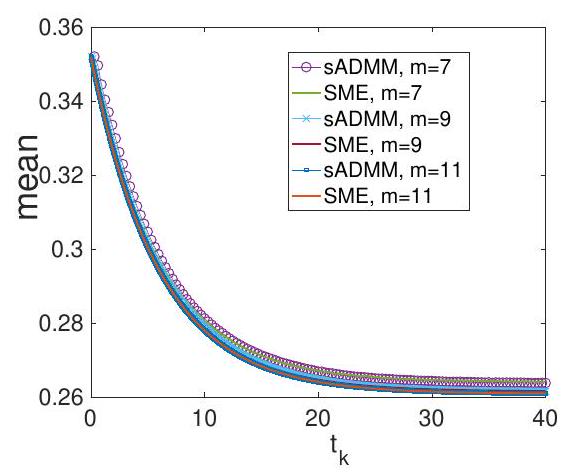}
\caption{The mean of $\varphi(x_{k})$ (sADMM) and $\varphi(X_{k \epsilon})$ (SME)}
\end{subfigure}
\hfill
\begin{subfigure}[b]{0.45\textwidth}
\centering
\includegraphics[width = \textwidth]{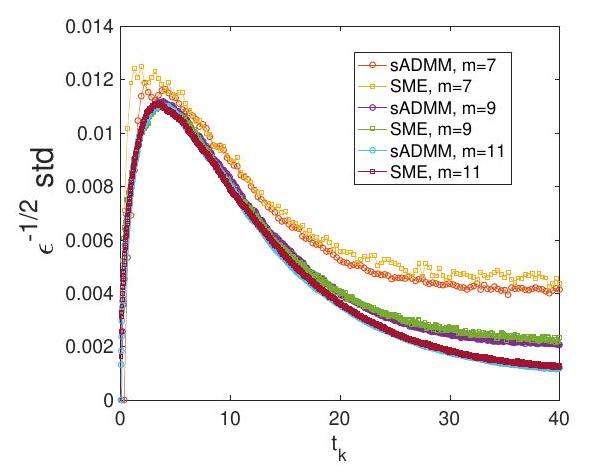}
\caption{The rescaled standard deviation of $\varphi(x_{k})$ and $\varphi(X_{k \epsilon})$}
\end{subfigure}
\caption{%Fig.~5.8: 
The mean and standard deviation of $\varphi(x_{k})$ (sADMM) and $\varphi(X_{k \epsilon})$ (SME). $\alpha=1.5$, $c=1$, $\omega=1$, $\omega_{1}=1$; $T=40$; $g(z)=\frac{1}{2} \beta\|z\|_{1}$; $\epsilon=T 2^{-m}$. The results are based on 4000 independent runs.}
\label{fig5.8}
\end{figure}

In our experiments, we set the dimension $d=3$.
Let $A$ be the Hilbert matrix multiplied by 0.5.
Note that $A^{\top} A$ is quite close to singular since the minimal eigenvalue is only 1.8e$-6$.
Set $\sigma_{\zeta}^{2}=0.1, \beta=0.2$ and set the vector $v$ as linspace$(1,2, d)$.
Choose the initial $X_{0}=x_{0}$ as the zero vector, $z_{0}=A x_{0}$ and set $c=1$.
For the ridge regression, we choose the test function
$$
\varphi(x)=\sum_{i=1}^{d} \exp \left(-x_{(i)}\right)
$$
for ridge regression, which is the sum of all components.
For the lasso regression, we choose the objective function $f(x)+g(A x)$ as the test function:
$$
\varphi(x)=f(x)+\beta\|A x\|_{1}=\frac{1}{2}(x-v)^{\top} \Omega(x-v)+\frac{1}{2} \sigma_{\zeta}^{2}+\beta\|A x\|_{1}
$$
since the optimal solution is not unique.
The choice of the step size $\epsilon=2^{-m} T$ with $T=40$ and an integer $m$.

\begin{figure}[!tb]
\centering
\includegraphics[width = 0.6\textwidth]{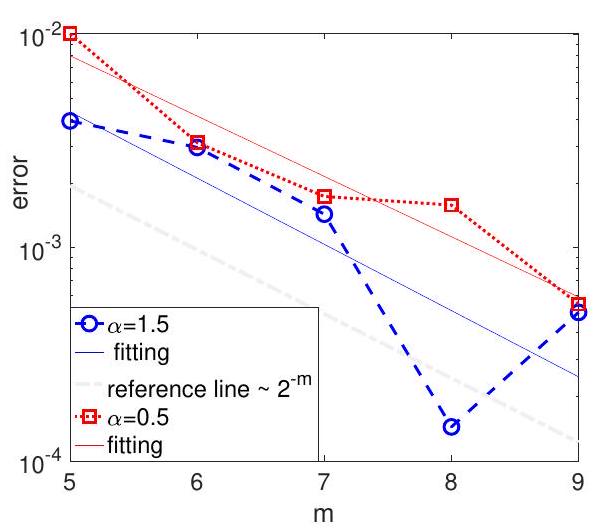}
\caption{%Fig.~5.9: 
The weak convergence of order 1. The error is $\left|\mathbb{E}[\varphi(x_{\lfloor T \rho\rfloor})-\varphi(X_{T})]\right|$. Ridge regression. 4000 independent runs. Other parameters are the same as in Figure~\ref{fig5.7}; $\epsilon=T 2^{-m}$. The reference line has the exact slope $-1$ in the semi-log plot, corresponding to the exact relation that error $=\text{const}\times \epsilon$.}
\label{fig5.9}
\end{figure}

\begin{figure}[!tb]
\centering
\begin{subfigure}[b]{0.45\textwidth}
\centering
\includegraphics[width = \textwidth]{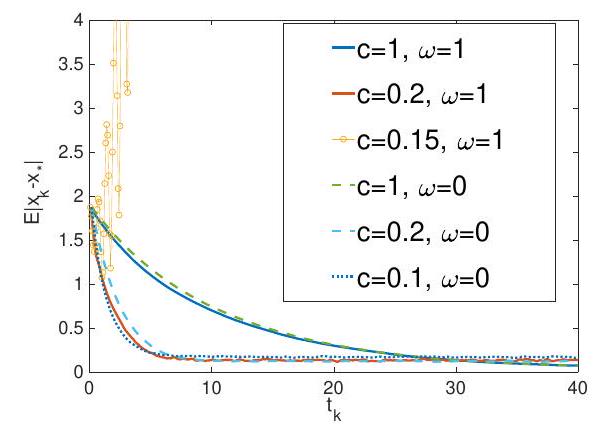}
\caption{The mean of the error.}
\end{subfigure}
\hfill
\begin{subfigure}[b]{0.45\textwidth}
\centering
\includegraphics[width = \textwidth]{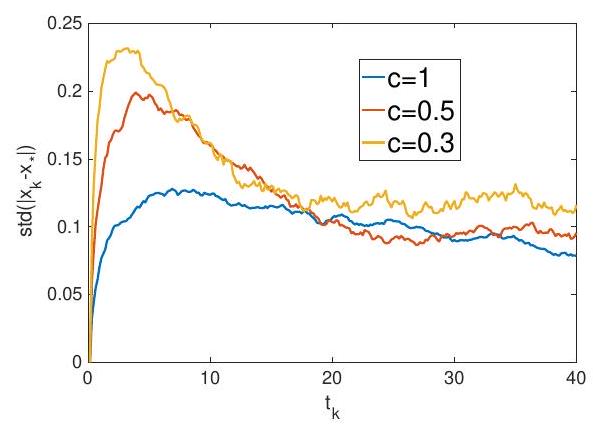}
\caption{The std of the error for $\omega=0$.}
\end{subfigure}
\caption{%Fig.~5.10: 
With the fixed $\alpha=1.5$, the convergence/divergence of $x_{k}$ from stochastic ADMM for $\omega=1$ and $\omega=0$ with different choices of the parameter $c=\tau / \rho$. The average and standard deviation of the error $\|x_{k}-x_{*}\|$ are based on 400 independent runs. $\epsilon= T / 2^{8}$, $T=40$, i.e., $\rho=1 / \epsilon=6.4$. $g(z)=\frac{1}{2} \beta\|z\|_{2}^{2}$. At $c=0.15$ (less than the critical value $0.165$) and $\omega=1$, the matrix $\widehat{M}$ has one negative eigenvalue $-0.0153$ and the ADMM trajectory $x_{k}$ diverges.}
\label{fig5.10}
\end{figure}

\begin{figure}[!tb]
\centering
\begin{subfigure}[b]{0.45\textwidth}
\centering
\includegraphics[width = \textwidth]{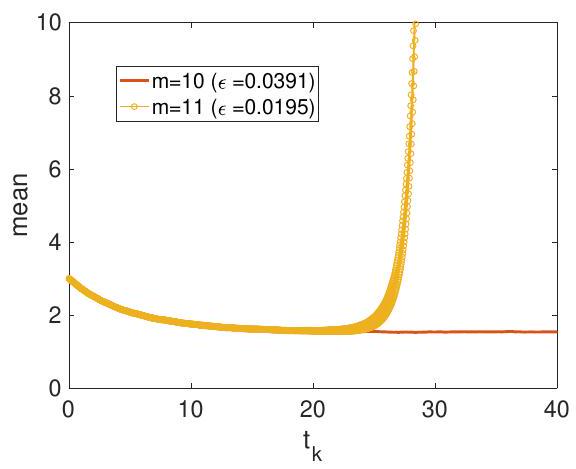}
\caption{The mean of $\varphi(x_{k})$.}
\end{subfigure}
\hfill
\begin{subfigure}[b]{0.45\textwidth}
\centering
\includegraphics[width = \textwidth]{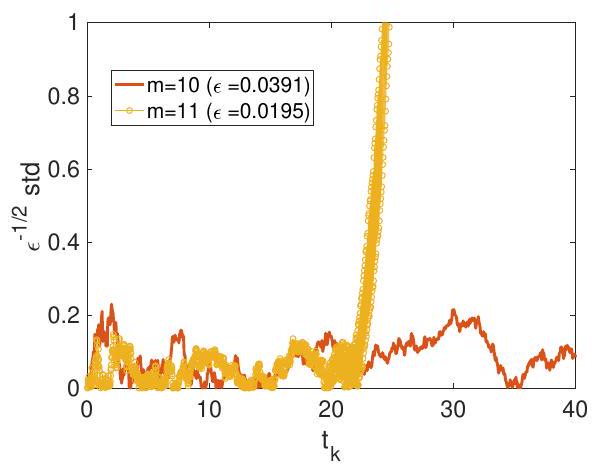}
\caption{The std of $\varphi(x_{k})$.}
\end{subfigure}
\caption{%Fig.~5.11: 
$\alpha=2.02$. The trajectory $x_{k}$ from stochastic ADMM almost surely diverges if the stepsize $\epsilon=T / 2^{m}$ is small enough. $\omega_{1}=1$ and $\omega=0$. $c=\tau / \rho=1.400$ independent runs. $T=40$.}
\end{figure}
\begin{center}
\label{fig5.11}
\end{center}

\paragraph{Consistence between G-sADMM and SME.}
We show in Figure~\ref{fig5.7}a the mean of $\varphi(x_{k})$ and $\varphi(X_{k \epsilon})$ versus the time $t_{k}=k \epsilon$, for a moderately large $\epsilon=1.25$ at $m=5$ ($\rho$ is as small as 0.8$)$.
The results agree very well even for this small $\rho<1$.
To test the match of the fluctuation, we plot in Figure~\ref{fig5.7}b the rescaled standard deviation, $\epsilon^{-1 / 2}$ std, from both the stochastic ADMM and the continuous model for three different values of $\epsilon=2^{-m} T$ with $m=5,6,7$.
This figure confirms the scaling of std is $\mathcal{O}(\sqrt{\epsilon})$ and the weak convergence between the $x_{k}$ and $X_{k \epsilon}$.
It is worth pointing out that the values of $\rho$ are not large, only $0.8,1.6,3.2$, even though in the weak convergence, the limit of $\rho \rightarrow \infty$ is considered in Theorem~\ref{theo3.2}.

The results for lasso regression are also shown in Figure~\ref{fig5.8}, where we see the agreement for the means from stochastic ADMM and the SME and the agreement of the standard deviations for any fixed $\epsilon$.
This suggests that Theorem~\ref{theo3.2} may hold too even for non-smooth $g$.
However, the (rescaled) standard deviations for different $\epsilon$ do not overlap, so the scaling of $\operatorname{std}(x_{k})$ is $\mathcal{O}(\sqrt{\epsilon})$ may break down, in the case of a non-smooth $g$.

\paragraph{Weak convergence.}
The numerical verification of the weak convergence is presented in Figure~\ref{fig5.9} for the ridge regression, where $g$ is smooth and satisfies the regularity assumption in our theory.

\paragraph{The effect of parameter $c, \alpha$.}
Recall $c=\tau / \rho$, so $c$ determines the choice of the parameter $\tau$ used in G-sADMM~\eqref{eq1.6a}---\eqref{eq1.6c}.
Note $\widehat{M}=c I+(1 / \alpha-\omega) A^{\top} A$ plays the important role in the SME~\eqref{eq3.9}.

\paragraph{Over-relaxation $\alpha \in(1,2)$.}
Figure~\ref{fig5.10}a studies the behavior of convergence towards the true minimum point $x_{*}$ with the over-relaxation $\alpha=1.5$.
Then for $\omega=1$, with a decreasing value of $c, \widehat{M}$ stops being positive definite when $c<0.165$; while for $\omega=0$, $\widehat{M}$ is always positive definite for any $c>0$.
Figure~\ref{fig5.10} a shows the consistency of the convergence with the positive definiteness of $\widehat{M}$.%
\footnote{It is worthwhile noting that one needs a sufficient small $\epsilon$ to see this consistency between the discrete and the continuous-time model: for example if $m=4(\epsilon=1.25$ and $\rho=0.4), c=0.2(>0.165)$ leads to the divergence instead of convergence.
}
Figure~\ref{fig5.10}b shows the standard deviation when $c$ varies.
Our observation here for the over-relaxation is that a smaller $c$ can help decay faster (acceleration) on the expectation, but the fluctuation in stochastic ADMM also rises.
Figure~\ref{fig5.10}b at large time $(t>20)$ suggests that when the solution approaches the equilibrium (i.e., in saturation), the fluctuation is then independent of the choice of $c$.

\paragraph{Under-relaxation $\alpha \in(0,1)$.}
For the under-relaxation, $1 / \alpha-\omega>0$ for any $\omega \in[0,1]$ and thus $\widehat{M}$ is always positive definite, whose smallest eigenvalue is very close to $c$.
However, we observed in tests that $x_{k}$ diverges regardless of how small $\epsilon$ is if $c$ is less than 0.16.
For sufficient large $c(>0.16), x_{k}$ converges.
This suggests there are some myths about the convergence in this under-relaxation case: $\widehat{M}$ should be stronger than positive definite with some small but strictly positive lower bound for the eigenvalues to ensure the convergence.
Our current Corollary~\ref{coro3.1} and the continuous model in Theorem~\ref{theo3.2} are insufficient to answer this myth, which will be a future work.

\paragraph{Other choice of $\alpha \notin(0,2)$.}
In addition, for any negative $\alpha$ or $\alpha>2$, the stochastic ADMM scheme always diverges if $\epsilon$ is sufficiently small (i.e.~$\rho$ is sufficiently large), which is in alignment with Corollary~\ref{coro3.1} about the asymptotics of residual in the small $\epsilon$ regime: the residual converges to zero if and only if $\alpha \in(0,2)$.
It is important to note that if $\epsilon$ is not small enough, the ADMM sequence may also converge for $\alpha \notin(0,2)$.
As an illustration, we set $\alpha=2.02$ and compute the stochastic ADMM with two different stepsize $\epsilon$.
Figure~\ref{fig5.11} shows that for the smaller stepsize, the stochastic ADMM actually diverges since the corresponding continuous model is not stable.

\subsection{Discussion on adaptive parameter adjustment for step size, batch size, $c$ and $\alpha$}\label{sec5.3}
We showed above the SME formulation for understanding the dynamics of the stochastic ADMM both in theory and in numerics.
We now turn to a more interesting and challenging issue of how Theorem~\ref{theo3.2} can help design better adaptive schemes in practice.
In general, the main idea is based on some optimal control problem for the derived continuous-time model~\cite{LTE2017SME}.
The seek of the solvable and simple results of underlying control problems is the main endeavor to make the algorithm work efficiently in practice.
We here mainly focus on the one-dimensional case $d=1$ to present the idea and the extension to high dimensional problems might be carried out by imposing a local diagonal-quadratic assumption.

\paragraph{Two-phase behavior.}
The two-phase behavior of training convex functions with stochastic gradient is well known~\cite{NIPS2011_4316,LTE2017SME}.
The transition time $t_{*}$ is defined by the time when $\mathbb{E}\left|X_{t_{*}}-x_{*}\right|=\sqrt{\operatorname{Var} X_{t_{*}}}$.
Before $t_{*}$, the descent dominates and after $t_{*}$, the fluctuation dominates.
For a solvable SDE, as an illustration, we consider $d X_{t}=-X_{t} d t+\sqrt{\epsilon} \sigma d W_{t}$, which gives $t_{*}=\frac{1}{2} \log \left(2 x_{0}^{2} / \sigma^{2} \epsilon+1\right)$.

\paragraph{Adaptive batch size.}
The stochastic scheme~\eqref{eq1.6a}---\eqref{eq1.6c} uses only one instance of the gradient $f^{\prime}(x, \xi_{k+1})$ in each iteration.
If $f^{\prime}(x, \xi_{k+1})$ is replaced by the average $\frac{1}{B_{k+1}} \sum_{i=1}^{B_{k+1}} f^{\prime}(x, \xi_{k+1}^{i})$ where $B_{k+1}>1$ is the batch size and $(\xi_{k+1}^{i})$ are $B_{k+1}$ i.i.d.~samples, then $\Sigma$ should be multiplied by a factor $\frac{1}{B_{t}}$ where the continuous-time function $B_{t}$ is the linear interpolation of $B_{k}$ at times $t_{k}=k \epsilon$.
The stochastic modified equation~\eqref{eq3.9} is then in the following form $\widehat{M} d X_{t}=-\nabla V(X_{t}) d t+\sqrt{\frac{\epsilon}{B_{t}}} \sigma(X_{t}) d W_{t}$.
We consider the solvable SDE above again, but with adaptive $B_{t}: d X_{t}=-X_{t} d t+\sqrt{\epsilon / B_{t}} \sigma d W_{t}$.
Then before $t_{*}$, we can use a constant $B_{t}=B_{0}$.
After time $t>t_{*}$, our goal is to control the fluctuation $\sqrt{\operatorname{Var} X_{t_{*}}} \mathbb{E}$ by increasing $B_{t}$ to be as small as $\mathbb{E}\left|X_{t_{*}}-x_{*}\right|$.
For the solvable $\mathrm{SDE}$, this leads to the choice of $B_{t}$ as $B_{t}=B_{0} \frac{\sigma^{2} \epsilon}{2 X_{t_{*}}}\left(e^{2\left(t-t^{*}\right)}-1\right)$, for $t>t_{*}$.

\paragraph{Adaptive step size and adaptive $\widehat{M}$.}
The adaptive step size $\epsilon_{k}=1 / \rho_{k}$ can be linear interpolated to a continuous-time step size $\epsilon_{t}$ written in the form of $\epsilon_{0} u_{t}$ where $\epsilon_{0}$ is the maximal possible step size with a fixed $\epsilon_{0}$ and $u_{t} \in(0,1)$.
Then~\eqref{eq3.9} becomes $
\widehat{M} d X_{t}
=
-u_{t} \nabla V(X_{t}) d t+u_{t} \sqrt{\frac{\epsilon_{0}}{B_{t}}} \sigma(X_{t}) d W_{t}
$.
This step size adaptivity is exactly equivalent to the adaptive choice of $\widehat{M}_{t}=\widehat{M}_{0} / u_{t}$ with a fixed $\epsilon=\epsilon_{0}$:
\begin{equation}\label{eq5.5}%\tag{5.5}
\frac{1}{u_{t}} \widehat{M}_{0} d X_{t}=-\nabla V(X_{t}) d t+\sqrt{\frac{\epsilon}{B_{t}}} \sigma(X_{t}) d W_{t}
\end{equation}
The adaptive choice of $\widehat{M}_{t}$ can be implemented by the adaptive choice for the ADMM parameters $c_{t}=\tau_{t} / \rho$ and relaxation parameter $\alpha_{t}$.

Our goal is to find an adaptive control rule of $u_{t}$ to solve $\min _{u:[0, T] \rightarrow[0,1]} \mathbb{E} V\left(X_{T}\right)$ subject to~\eqref{eq5.5}.
We consider a one dimensional quadratic objective function $V(x)=\frac{1}{2} a(x-b)^{2}$ and assume that $\sigma(x)=\sigma$ a positive constant.
The optimal control for this problem in the feedback form is $u_{t}^{*}=\min \left(1, \frac{2 \mathbb{E} V(X_{t})}{\epsilon \sigma^{2}}\right)$~\cite{LTE2017SME,Fleming2012Book}.
This policy means that if the fluctuation $\epsilon \sigma^{2}$ is small compared to the objective value $2 \mathbb{E} V(X_{t})$ (in the early stage of training), then $u_{t}^{*}$ reaches its ceiling 1 and we use the minimal $\widehat{M}_{t}=\widehat{M}_{0}$.
To construct the minimal $\widehat{M}_{0}$, one can set $\alpha=2$ for example (or additionally $c=0$ if $\omega=0$).
But when the fluctuation starts to dominate near the optimum, we should use a larger $\widehat{M}_{t}$.
Increasing $\widehat{M}_{t}$ can be done by increasing $c$ or decreasing $\alpha$ (or $\omega=0$ preferred).
The feedback control $u_{t}^{*}$, after being plugged back in~\eqref{eq3.9}, leads to the open-loop result $u_{t}^{*}=\min \left(1, \frac{2 \mathbb{E} V(X_{t})}{\epsilon \sigma^{2}}\right)=\frac{1}{1+a\left(t-t^{*}\right)}$ and consequently $\widehat{M}_{t}=\widehat{M}_{0}\left(1+a\left(t-t^{*}\right)\right)$ when $t>t_{*}$.
Note in a special case $\alpha=\omega=1$, then $c_{t}=\widehat{M}_{t}$ and the schedule for $\tau$ is $\tau_{t}=\tau_{0}\left(1+a(t-t^{*})\right)$.

\section{Conclusion and discussion on future works}\label{sec6}
In this paper, we have developed the stochastic continuous dynamics in the form of a stochastic modified equation (SME) to analyze the dynamics of a general family of stochastic ADMM, including the standard, linearized and gradient-based ADMM with relaxation $\alpha$.
Our continuous model~\eqref{eq3.9} provides a unified framework to describe the dynamics of stochastic ADMM algorithms and particularly can quantify the fluctuation effect for a large penalty parameter $\rho$.
Our analysis here generalizes the existing works~\cite{francca2018admm,pmlr-v97-yuan19c} limited in ordinary differential equation, and reveals the impact of the noise in the stochastic ADMM.
As the first-order approximation to the stochastic ADMM trajectory, the solution to the continuous model can precisely describe the mean and the standard deviation (fluctuation) of stochastic trajectories generated by stochastic ADMM.

One distinctive feature between the ADMM and its stochastic or online variants is highly similar to that between gradient descent and stochastic gradient descent~\cite{NIPS2011_4316,LTE2017SME,shamir2013stochastic}: there exists a transition time $t_{*}$ after which the fluctuation (with a typical scale $\rho^{-1 / 2}$) starts to dominate the "drift" term, which means the traditional acceleration methods in deterministic case will fail to perform well, despite of their prominent performance in the early stage of training when the drift suppresses the noise.
Our Section~\ref{sec5.3} extensively discusses this issue and the various aspects of the potential practicality of our theory: we briefly discuss a few new adaptive strategies such as for parameters $\rho_{t}, \tau_{t}$, the batch size $B_{t}$ and even relaxation parameters $\alpha_{t}$.
For example, a large $\alpha$ (over-relaxation) is preferred before the transition time while a small $\alpha$ (under-relaxation) may be preferred later to help reduce the variance.

For the possible further development in theory, we first note that we require the differential function $f$ and $g$ in our current assumption.
With the tool of stochastic differential inclusion and set-value stochastic process~\cite{SDI2003book}, we may also justify the given stochastic differential equation to allow the non-smooth $g$.
In addition, our family of ADMM methods is still restricted to the linearized ADMM and the gradient-based ADMM.
Recently, there emerge many new efficient acceleration methods for stochastic ADMM~\cite{NIPS2019_8955,pmlr-v32-zhong14,pmlr-v97-huang19a,pmlr-v28-ouyang13} and the stochastic variation reduction gradient in the context of ADMM~\cite{ADMM-Kwok-2016,VRSADMM-Cheng-2017,YuHuang2017}.
A potential future work might be to extend our stochastic analysis to these broad families for a better understanding of their properties at the continuous level.

\bibliography{references}
\bibliographystyle{alpha}

\end{document}